\def\@cite#1#2{[{{\bfseries #1}\if@tempswa , #2\fi}]}
\newcommand{\N}{\mathbb{N}}
\newcommand{\ep}{\varepsilon}
\newtheorem{theorem}{Theorem}[section]
\newtheorem{lemma}[theorem]{Lemma}
\newtheorem{proposition}[theorem]{Proposition}
\newtheorem{corollary}[theorem]{Corollary}
\theoremstyle{remark}
\newtheorem{remark}{Remark}[section]
\theoremstyle{definition}
\numberwithin{equation}{section}
\begin{document}
\title{\bf Singular limit problem of abstract \\ second order evolution equations}
\author{Ryo Ikehata \\{\small Department of Mathematics, Graduate School of Education}\\{\small Hiroshima University}\\ {\small Higashi-Hiroshima 739-8524, Japan}\\ and \\ Motohiro Sobajima\thanks{Corresponding author: msobajima1984@gmail.com} \\{\small Department of Mathematics, Faculty of Science and Technology}\\{\small Tokyo University of Science}\\ {\small Noda 278-8510, Japan}}
\date{}
\maketitle
%%%%%%%%%%%%%%%%%%%%%%%%%　abstract　%%%%%%%%%%%%%%%%%%%%%%
\begin{abstract}
We consider the singular limit problem in a real Hilbert space for abstract second order evolution equations with a parameter $\ep \in (0,1]$. We first give an alternative proof of the celebrated results due to Kisynski \cite{Kisynski1963} from the viewpoint of the energy method. Next we derive a more precise asymptotic profile as $\ep \to +0$ of the solution itself depending on $\ep$ under rather high regularity assumptions on the initial data.   
\end{abstract}

%%%%%%%%%%%%%%%%%%　introduction　%%%%%%%%%%%%%%%%%%%%%%%%
\section{Introduction}\label{sec:introduction}
\footnote[0]{Keywords and Phrases: Second order; Evolution equations; Hilbert spaces; Singular limit; Initial layer; Energy method.}
\footnote[0]{2010 Mathematics Subject Classification. Primary 34G10, 34K26; Secondary 34K30, 34K25.}
Let $H$ be a real Hilbert space with the inner product $(\cdot,\cdot)$ and the norm $\|\cdot\|$. 
In this paper, 
we consider singular limit problems for 
an abstract linear differential equations of the form 
\begin{equation}\label{ADW}
\begin{cases}
\ep u_\ep''(t)+Au_\ep(t)+u_\ep'(t)=0, \quad t \geq 0,
\\
(u_{\ep},u_{\ep}')(0)=(u_0,u_1), 
\end{cases}
\end{equation}
where $A$ is a nonnegative self-adjoint operator in $H$ endowed with domain $D(A)$ 
and $\ep \in (0,1]$. It should be emphasized that we never assume $A$ to be corecive as is frequently studied. The corecive case is out of scope in this research. 

The main topic of this paper is to discuss the asymptotic profile of the solution $u_\ep(t)$ 
as $\ep \to +0$ in the abstract framework. About a clear answer to the singular limit problem one can cite the following celebrated work due to Kisynski \cite{Kisynski1963} in 1963. 
\begin{theorem}[{Kisynski \cite{Kisynski1963}}]\label{thm:kisynski}
The following assertions hold:
\begin{itemize}%[\bf (i)]
     \setlength{\itemsep}{0pt}
\item[{\rm (i)}] If $(u_0,u_1) \in D(A^{1/2})\times H$, then there exists a positive constant 
$C>0$ such that the unique solution $u_{\ep}(t)$ to problem \eqref{ADW} satisfies
\[
\|u_\ep(t)-e^{-tA}u_0\|\leq C\Big(\ep^{1/2}\|A^{1/2}u_0\|+\ep\|u_1\|\Big).
\]
\item[{\rm (ii)}] 
If $(u_0,u_1)\in D(A)\times H$, then there exists a positive constant 
$C>0$ such that the unique solution $u_{\ep}(t)$ to problem \eqref{ADW} satisfies
\[
\|u_\ep(t)-e^{-tA}u_0\|\leq C\ep \Big(\|Au_0\|+\|u_1\|\Big).
\]
\end{itemize}
\end{theorem}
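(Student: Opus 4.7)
The strategy is to study the error $v_\ep(t) := u_\ep(t) - e^{-tA}u_0$ via an integrated reformulation of \eqref{ADW}, combined with the standard wave-type energy of $u_\ep$. Setting $w(t) := e^{-tA}u_0$, the direct (formal) second-order equation $\ep v_\ep'' + v_\ep' + Av_\ep = -\ep w''$ is unsuitable because the source $\ep A^2 w(t)$ is singular at $t = 0$ and the initial velocity $u_1 + Au_0$ is not even well-defined in case (i). To sidestep this, I integrate \eqref{ADW} in time on $[0,t]$ and subtract the corresponding integrated heat identity $w(t) + A\int_0^t w(s)\,ds = u_0$, obtaining the clean identity
\begin{equation*}
v_\ep(t) + A V_\ep(t) = \ep\bigl(u_1 - u_\ep'(t)\bigr), \qquad V_\ep(t) := \int_0^t v_\ep(s)\,ds.
\end{equation*}
The right-hand side is $\ep$ times a quantity controlled by the standard $\ep$-energy of $u_\ep$.

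Testing \eqref{ADW} against $u_\ep'$ gives the dissipation identity
\[
E_\ep(t) + 2\!\int_0^t \|u_\ep'(s)\|^2\,ds = E_\ep(0), \quad E_\ep(t) := \ep\|u_\ep'(t)\|^2 + \|A^{1/2}u_\ep(t)\|^2,
\]
hence $\ep\|u_\ep'(t)\|^2 \le E_\ep(0) = \ep\|u_1\|^2 + \|A^{1/2}u_0\|^2$ and $\int_0^\infty \|u_\ep'\|^2\,ds \le \tfrac12 E_\ep(0)$. Reading $\|v_\ep(t)\| \le \ep\|u_1\| + \ep\|u_\ep'(t)\| + \|AV_\ep(t)\|$ from the above identity, the first two terms are bounded by $\ep\|u_1\| + (\ep E_\ep(0))^{1/2} \le 2\ep\|u_1\| + \sqrt\ep\,\|A^{1/2}u_0\|$, which already matches the form required in (i). For $\|AV_\ep(t)\|$, I view the key identity as the first-order heat-type equation $V_\ep' + AV_\ep = \ep(u_1 - u_\ep'(t))$ with $V_\ep(0) = 0$ and apply appropriate multipliers ($V_\ep'$, $AV_\ep$, or combinations thereof): multiplication by $AV_\ep$ gives $\tfrac{d}{dt}\|A^{1/2}V_\ep(t)\|^2 + \|AV_\ep(t)\|^2 \le \ep^2\|u_1 - u_\ep'(t)\|^2$, and combining with the $L^2_t$-dissipation of $u_\ep'$ closes the estimate for (i). For (ii), an analogous argument exploiting the extra regularity $u_0 \in D(A)$ — either directly at one regularity level higher, or via a spectral splitting of the initial data into low and high frequencies relative to the scale $1/\ep$ (where on the high-frequency part $\|A^{1/2}(\text{high})u_0\|^2 \le \ep\|Au_0\|^2$) — upgrades the factor $\sqrt\ep$ into $\ep$.

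\emph{Main obstacle.} The delicate step is to extract a pointwise-in-$t$ estimate for $\|AV_\ep(t)\|$ from what the energy method naturally provides as $L^2_t$-in-time bounds, since the forcing $\ep(u_1 - u_\ep'(t))$ has $L^2_t$ norm that grows linearly in $t$ through the $u_1$ contribution. I expect to overcome this by combining multipliers into a single Lyapunov functional for the integrated identity, together with the algebraic relation $AV_\ep(t) = \ep(u_1 - u_\ep'(t)) - v_\ep(t)$ used as a back-substitution step to convert the $L^2_t$ control into pointwise-in-$t$ control.
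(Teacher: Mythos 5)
Your reduction is correct as far as it goes: the integrated identity $v_\ep(t)+AV_\ep(t)=\ep(u_1-u_\ep'(t))$ with $V_\ep(t)=\int_0^t v_\ep(s)\,ds$ is valid, the basic dissipation identity controls $\ep\|u_\ep'(t)\|$ pointwise, and $V_\ep$ is (up to the correction discussed below) exactly $\ep$ times the auxiliary function the paper uses. But the step you yourself flag as the ``main obstacle'' is a genuine gap, and neither of your proposed remedies closes it. Multiplying $V_\ep'+AV_\ep=\ep(u_1-u_\ep')$ by $AV_\ep$ yields pointwise control of $\|A^{1/2}V_\ep(t)\|$ and only an $L^2_t$ bound on $\|AV_\ep\|$; no combination of the multipliers $V_\ep'$ and $AV_\ep$ on this \emph{first-order} equation produces the pointwise-in-$t$ bound on $\|AV_\ep(t)\|$ (equivalently on $\|v_\ep(t)\|$) that the theorem requires. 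The back-substitution $AV_\ep=\ep(u_1-u_\ep')-v_\ep$ is tautological: it converts the problem of bounding $\|AV_\ep(t)\|$ into that of bounding $\|v_\ep(t)\|$, which is the original goal. There is a second, independent defect: the forcing contains the constant $\ep u_1$, so every $L^2_t$ bound on the source grows like $\ep^2 t\|u_1\|^2$ and the resulting estimates degenerate as $t\to\infty$.

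The missing ideas are precisely the two devices the paper uses. First, keep the \emph{second-order} structure: your identity is equivalent to $\ep V_\ep''+AV_\ep+V_\ep'=\ep\big(u_1+Ae^{-tA}u_0\big)$ with $(V_\ep,V_\ep')(0)=(0,0)$, and testing this with $V_\ep'$ gives the damped-wave energy $\ep\|V_\ep'(t)\|^2+\|A^{1/2}V_\ep(t)\|^2$, whose \emph{first} term is controlled pointwise; since $v_\ep=V_\ep'$, this is what converts an a priori $L^2_t$ quantity into the pointwise bound $\|v_\ep(t)\|\le\ep^{-1/2}\big(\int_0^t\|\ep(u_1+Ae^{-sA}u_0)\|^2ds\big)^{1/2}$, and the $u_0$-part gives exactly $\sqrt{\ep}\,\|A^{1/2}u_0\|$ by the maximal regularity bound $\int_0^\infty\|Ae^{-sA}u_0\|^2ds\le\tfrac12\|A^{1/2}u_0\|^2$. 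Second, to kill the non-decaying $\ep u_1$ in the source, compare $u_\ep$ with $e^{-tA}(u_0+\ep J_\ep u_1)$, $J_\ep=(1+\ep A)^{-1}$, rather than with $e^{-tA}u_0$ (this costs only $\ep\|u_1\|$, and the identity $u_1=(1+\ep A)J_\ep u_1$ absorbs the constant into the semigroup term, with $\ep^2\|A^{1/2}J_\ep u_1\|^2\le\ep\|u_1\|^2$). For part (ii) your plan is only a sketch; the paper's route is to split the data into $(u_0,-Au_0)$ and $(0,Au_0+u_1)$, apply part (i) to the second piece, and for the first piece write the error as $\ep A^{1/2}Z_\ep(t)$ so that the \emph{elastic} part $\|A^{1/2}Z_\ep(t)\|^2$ of the energy gives the full factor $\ep$. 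Your alternative idea of a frequency splitting at scale $1/\ep$ is plausible but is not carried out, so as written both parts remain incomplete at their decisive steps.
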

From the results above one can see that the solution $u_{\ep}(t)$ converges to some solution $v(t)$ (as $\ep \to +0$) of the first order evolution equation with some rate of $\ep$:
\begin{equation}\label{PE}
\begin{cases}
Av(t) + v'(t)=0, \quad t>0, 
\\
v(0) = u_0. 
\end{cases}
\end{equation}
The proof of Theorem \ref{thm:kisynski} is based on the (abstract) spectral analysis, and seems to be rather complicated. Furthermore, it is quite important to deduce the so-called initial layer term as stated in the introduction of the paper \cite{Kisynski1963} from the viewpoint of the number of the initial data between \eqref{ADW}) and \eqref{PE}. 

If one employs an idea introduced by Lions \cite[p. 492, (5.10), (5.11)]{BOOK:Lions}, the initial layer function $\theta_{\ep}(t)$ is defined by a solution $\theta_{\ep}(t)$ to the $\ep$-dependent equation:
\begin{equation}\label{IL}
\begin{cases}
\ep \theta_{\ep}''(t) + \theta_{\ep}'(t) = 0, \quad t \geq 0,
\\
\theta_{\ep}(0) = 0, \quad \theta_{\ep}'(0) = u_{1} + Au_{0}. 
\end{cases}
\end{equation}
With this function $\theta_{\ep}(t)$, the precise singular limit problem is formulated as follows:
\[\Vert u_{\ep}(t) - (v(t) + \theta_{\ep}(t))\Vert \to 0, \quad \Vert u_{\ep}(t) - v(t)\Vert \to 0 \quad (\ep \to +0),\]
with some rate. In this connection, in our case one can find exactly
\[\theta_{\ep}(t) = \ep(1-e^{-\frac{t}{\ep}})(u_{1} + Au_{0}).\]
While, almost 40 years later Ikehata \cite{Ikehata2003} derived the following estimate: 
\[\int_{0}^{\infty}\Vert u_{\ep}(t)-v(t)\Vert^{2}dt = O(\ep) \quad (\ep \to +0)\]
under the assumption on the initial data such that $u_{0} \in D(A)$, and 
\begin{equation}\label{IL0}
u_{1} + Au_{0} = 0.
\end{equation}
This result also provides an answer to our singular limit problems. However, the assumption \eqref{IL0} imposed on the initial data is rather strong, and in this case one has $\theta_{\ep}(t) \equiv 0$. Ikehata \cite{Ikehata2003} employed a device which has its origin in Ikehata-Matsuyama \cite{IkMa2002}, that is, in order to get the estimate the author of \cite{Ikehata2003} uses a function $U(t)$ defined by
\begin{equation}\label{ikehata03-U}
U(t) := \int_{0}^{t}(u_{\ep}(s)-v(s))ds.
\end{equation}   
Then, the function $U(t)$ satisfies 
\[
\begin{cases}
U'(t) + AU(t) = -\ep Au_{0}+\ep u_{\ep}'(t), 
\quad t>0, 
\\
U(0) = 0.
\end{cases}
\]
By applying the multiplier method (energy method) to the equation above, Ikehata obtained such a result. In this connection, the technique of \cite{IkMa2002} is an essential modification of the original idea coming from the celebrated Morawetz method in 1961. A similar concept was employed to a function $Z(t)$ defined by
\begin{equation}\label{ikenishi03}
Z(t) := \int_{0}^{t}\left(u(s)-e^{-sA}(u_{0}+u_{1})\right)ds,
\end{equation}   
where $u(t)$ is a solution to the Cauchy problem: 
\begin{equation}\label{ADW-0}
\begin{cases}
u''(t)+Au(t)+u'(t)=0, \quad t \geq 0,
\\
(u,u')(0)=(u_0,u_1). 
\end{cases}
\end{equation}
The function $Z(t)$ was also effective in the paper Ikehata-Nishihara \cite{IkNi2003} in order to derive (almost optimal) error estimates which show the so-called diffusion phenomena of the solution $u(t)$ to problem \eqref{ADW-0}. Soon after Ikehata \cite{Ikehata2003}, in \cite{ChHa2004} Chill-Haraux derived, above all, sharp point-wise in $t$ estimates such that
\[\Vert u_{\ep}(t)-v(t)\Vert \leq \frac{C\ep}{t}(\Vert u_{0}\Vert + \sqrt{\ep}\Vert u_{0}\Vert_{D(A)}), \quad t \geq 1\] 
under the assumption \eqref{IL0}. Chill-Haraux employed the spectral analysis in the abstract form. It should be emphasized that in both results of \cite{Ikehata2003} and \cite{ChHa2004} the difference $\Vert u_{\ep}(t)-v(t)\Vert$ together with $\Vert u_{\ep}'(t)-v'(t)\Vert$ or $\Vert A^{1/2}(u_{\ep}(t)-v(t))\Vert$ can be estimated. It seems that a most difficult part lies in the estimate for $\Vert u_{\ep}(t)-v(t)\Vert$ because the other factors can be included in the energy itself. 

By the way, observing from the viewpoint of the initial layer term, in Hashimoto-Yamazaki \cite{HaYa2007} and Ghisi-Gobbino \cite{GhGo2012} the authors derived the estimates for the quantities $\Vert u_{\ep}'(t) - (v'(t) + \theta_{\ep}'(t))\Vert$ and $\Vert u_{\ep}(t) - v(t)\Vert$ by using the spectral analysis and the energy method, respectively. 

Our first purpose is to give alternative proofs on the results of Kisynski \cite{Kisynski1963} by a version of the energy method introduced later. The second aim is to derive the following new result, which implies   asymptotic expansions in $\ep$ of the solution $u_{\ep}(t)$.
 
Our main result reads as follows.
\begin{theorem}\label{thm:main}
Assume that $(u_0,u_1)\in D(A^{3/2})\times D(A^{1/2})$ . 
Then there exists a positive constant $C$ such that the unique solution $u_{\ep}(t)$ to problem \eqref{ADW} satisfies
\begin{gather}
\nonumber
\Big\|
	u_\ep(t)-e^{-tA}u_0-\ep \Big(e^{-tA}(Au_{0}+u_{1})-tA^2e^{-tA}u_0-e^{-\frac{t}{\ep}}(Au_{0}+u_{1})\Big)
\Big\|
\\
\label{eq:mainthm}
\leq 
C\ep^{3/2}
\Big(
\|A^{3/2}u_0\|+
\|A^{1/2}(Au_0 + u_1)\|\Big)
\end{gather}
for every $t\geq 0$.
\end{theorem}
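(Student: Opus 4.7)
The plan is to construct an explicit approximate solution dictated by matched asymptotic expansions and then control the resulting residual by an energy argument on an integrated variable, following the device of Ikehata \cite{Ikehata2003} appearing in \eqref{ikehata03-U}. Set $w:=Au_0+u_1$ and
\[
U_\ep(t):=e^{-tA}u_0+\ep\bigl(e^{-tA}w-tA^2 e^{-tA}u_0-e^{-t/\ep}w\bigr),
\]
which is precisely the quantity on the left of \eqref{eq:mainthm}. Its structure is an outer expansion $v+\ep v_1$, in which $v(t)=e^{-tA}u_0$ solves \eqref{PE} and $v_1$ satisfies $v_1'+Av_1=-v''$ with $v_1(0)=w$ (giving the closed form $v_1(t)=e^{-tA}w-tA^2 e^{-tA}u_0$), together with an initial-layer correction $-\ep e^{-t/\ep}w$ chosen so that $U_\ep(0)=u_0$ and $U_\ep'(0)=u_1+O(\ep)$.

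Setting $R_\ep:=u_\ep-U_\ep$, a direct computation yields $R_\ep(0)=0$, $R_\ep'(0)=\ep A(2Au_0+u_1)$, and
\[
\ep R_\ep''+AR_\ep+R_\ep'=F_\ep(t):=-\ep^2 A^2 e^{-tA}w-2\ep^2 A^3 e^{-tA}u_0+\ep^2 tA^4 e^{-tA}u_0+\ep Ae^{-t/\ep}w.
\]
The outer pieces carry the favorable $\ep^2$ prefactor but involve too many powers of $A$ to be bounded directly under the given regularity, while the boundary-layer piece is only pointwise of size $\ep$ but has $L^1_t$-mass of order $\ep^2$. To convert this into a uniform-in-$t$ bound on $\|R_\ep(t)\|$, I set $W_\ep(t):=\int_0^t R_\ep(s)\,ds$ and integrate the $R_\ep$-equation once in $t$ to obtain
\[
\ep W_\ep''+W_\ep'+AW_\ep=G_\ep(t),\qquad W_\ep(0)=W_\ep'(0)=0,
\]
with $G_\ep(t)=\ep R_\ep'(0)+\int_0^t F_\ep(s)\,ds$. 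The elementary primitives $\int_0^t A^k e^{-sA}\,ds=A^{k-1}(I-e^{-tA})$ and $\int_0^t e^{-s/\ep}\,ds=\ep(1-e^{-t/\ep})$ collapse the apparent $A^2u_0$- and $Au_1$-contributions in $G_\ep$ into a function that factors as $G_\ep=A^{1/2}\widetilde G_\ep$ with
\[
\|\widetilde G_\ep(t)\|\leq C\ep^2\bigl(\|A^{3/2}u_0\|+\|A^{1/2}(Au_0+u_1)\|\bigr),\quad t\geq 0.
\]

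The remainder is a standard multiplier estimate. Testing the $W_\ep$-equation against $W_\ep'=R_\ep$ gives
\[
\tfrac{\ep}{2}\|R_\ep(t)\|^2+\tfrac{1}{2}\|A^{1/2}W_\ep(t)\|^2+\int_0^t\|R_\ep(s)\|^2\,ds=\int_0^t(G_\ep,R_\ep)\,ds,
\]
and integration by parts in $s$ turns the right-hand side into $(\widetilde G_\ep(t),A^{1/2}W_\ep(t))-\int_0^t(F_\ep,W_\ep)\,ds$. The boundary term is absorbed into $\tfrac{1}{4}\|A^{1/2}W_\ep(t)\|^2$ via Young's inequality. Each outer piece in $\int_0^t(F_\ep,W_\ep)\,ds$ is treated by writing $A^{k+1}e^{-sA}=-\partial_s(A^k e^{-sA})$, one further integration by parts, and the $L^2_t$-smoothing identity $\int_0^\infty\|A^{1/2}e^{-sA}v\|^2\,ds=\tfrac{1}{2}\|v\|^2$, reducing the integrals to quantities of the form $C\ep^2\|A^{1/2}w\|\bigl(\int_0^t\|R_\ep\|^2\,ds\bigr)^{1/2}$ that Young absorbs into the $\int_0^t\|R_\ep\|^2\,ds$ already present on the left; the boundary-layer piece is handled via the $L^1_t$-mass $\int_0^\infty e^{-s/\ep}\,ds=\ep$ and the factorization of $G_\ep$. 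Collecting everything gives $\ep\|R_\ep(t)\|^2\leq C\ep^4\bigl(\|A^{3/2}u_0\|+\|A^{1/2}(Au_0+u_1)\|\bigr)^2$, which is \eqref{eq:mainthm}.

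The main obstacle is the factorization $G_\ep=A^{1/2}\widetilde G_\ep$ with the stated bound under only $(u_0,u_1)\in D(A^{3/2})\times D(A^{1/2})$: a naive norm bound on $G_\ep$ would require $u_0\in D(A^2)$ and $u_1\in D(A)$, so the proof must exploit term by term the cancellations between $\ep R_\ep'(0)$, the constant-in-$t$ parts of $\int_0^t A^k e^{-sA}(\cdot)\,ds$, and $\int_0^t Ae^{-s/\ep}w\,ds$. These cancellations, together with the $\ep$ extracted by time-integration of the boundary-layer factor, are what pin down the exponents $3/2$ and $1/2$ appearing on the right-hand side of \eqref{eq:mainthm}.
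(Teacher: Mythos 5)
Your proposal is essentially correct in outline, and it takes a genuinely different route from the paper. I checked the key computations: with $w=Au_0+u_1$ your residual $F_\ep$ is exactly $-\ep^2A^2e^{-tA}w-2\ep^2A^3e^{-tA}u_0+\ep^2 tA^4e^{-tA}u_0+\ep Ae^{-t/\ep}w$, and after forming $G_\ep=\ep R_\ep'(0)+\int_0^tF_\ep\,ds$ the $\ep^2A^2u_0$ contributions do cancel, leaving
\[
G_\ep(t)=\ep^2\bigl(1-e^{-t/\ep}\bigr)Aw+\ep^2Ae^{-tA}w+\ep^2A^2e^{-tA}u_0-\ep^2 tA^3e^{-tA}u_0,
\]
which indeed factors as $A^{1/2}\widetilde G_\ep$ with $\|\widetilde G_\ep(t)\|\leq C\ep^2(\|A^{3/2}u_0\|+\|A^{1/2}w\|)$, using $\|tAe^{-tA}\|\leq e^{-1}$ for the last term. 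The paper proceeds quite differently: it splits $u_\ep=u_{1\ep}+u_{2\ep}$ according to the data $(u_0,-Au_0)$ and $(0,w)$, runs each piece through a cascade of resolvent-regularized auxiliary problems (Lemmas \ref{lem:u1ep-etAu0}, \ref{lem:W=Uep-v}, \ref{lem:ee-W1ep}), and extracts the initial layer $e^{-t/\ep}w$ at the very end by integrating a first-order ODE with the factor $e^{t/\ep}$. Your single global ansatz $v+\ep v_1-\ep e^{-t/\ep}w$ followed by one time-integration and one multiplier estimate is more in the spirit of matched asymptotics and is arguably more transparent about where the profile comes from; what the paper's route buys is that every intermediate object lives in $H$ from the start.

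Two points in your argument need real repair before it is a proof. First, under $(u_0,u_1)\in D(A^{3/2})\times D(A^{1/2})$ the quantities $A^2u_0$, $Aw$ and hence $R_\ep'(0)=\ep A(2Au_0+u_1)$ and the term $\ep^2(1-e^{-t/\ep})Aw$ in $G_\ep$ are not elements of $H$; you flag this only as an obstacle to a ``naive norm bound,'' but it already affects the well-definedness of $R_\ep'$ at $t=0$ and of the $W_\ep$-equation as an $H$-valued identity. You must either first prove the estimate for $u_0\in D(A^2)$, $u_1\in D(A)$ with constants depending only on $\|A^{3/2}u_0\|$ and $\|A^{1/2}w\|$ and then pass to the limit by density, or interpret the $W_\ep$-equation weakly against $D(A^{1/2})$-valued test functions; the paper's systematic insertion of $J_\ep=(I+\ep A)^{-1}$ (cf.\ Lemma \ref{eq:lem1-1-2}) is precisely the device that avoids this. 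Second, the boundary-layer contribution $\ep\int_0^te^{-s/\ep}(A^{1/2}w,A^{1/2}W_\ep(s))\,ds$ is controlled by $\ep^2\|A^{1/2}w\|\sup_{s\leq t}\|A^{1/2}W_\ep(s)\|$, not by $\|A^{1/2}W_\ep(t)\|$ alone, so the Young absorption must be run on $\sup_{t\leq T}$ of the energy rather than pointwise; this is routine but should be stated. With these two repairs your argument closes and reproduces \eqref{eq:mainthm}.
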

\begin{remark}
{\rm
A function corresponding to the initial layer in Theorem \ref{thm:main} 
is slightly different from $\theta_\ep(t)$ in \eqref{IL}. 
Indeed, by Theorem \ref{thm:main} we may find the following estimate
\begin{align*}
\|u_\ep(t)-(e^{-tA}u_0+\theta_\ep(t))\|
&\leq \ep \|(1-e^{-tA})(Au_0+u_1)+tA^2e^{-tA}u_0\|
\\
&\quad+
C\ep^{3/2}
\Big(
\|A^{3/2}u_0\|+
\|A^{1/2}(Au_0 + u_1)\|\Big).
\end{align*}
This estimate is enough to determine the singular limit for $u_\ep$. 
However, from the viewpoint of asymptotic expansions 
(with respect to $\ep$), it is natural to introduce 
the modified function in \eqref{eq:mainthm} as a description of initial layer. 
}
\end{remark}

Moreover, if \eqref{IL0} is satisfied, then 
taking $w_\ep(t)=u_\ep'(t)$, we can understand $w_\ep$ as the solution of 
the problem 
\begin{equation}\label{ADW-deri}
\begin{cases}
\ep w_\ep''(t)+Aw_\ep(t)+w_\ep'(t)=0, \quad t \geq 0,
\\
(w_{\ep},w_{\ep}')(0)=(-Au_0,0).
\end{cases}
\end{equation}
As an application of Theorem \ref{thm:main},
the asymptotic expansion of $u_\ep'(t)$ in $\ep$ can be also clarified. 
\begin{corollary}\label{cor:uep'}
Assume that $u_0\in D(A^{5/2})$ and \eqref{IL0}. 
Then there exists a positive constant $C$ such that the unique solution $u_{\ep}(t)$ to problem \eqref{ADW} satisfies
\begin{gather}
\Big\|
	u_\ep(t)-e^{-tA}u_0+\ep tA^2e^{-tA}u_0
\Big\|
\label{eq:cor1}
\leq 
C\ep^{3/2}\|A^{3/2}u_0\|, 
\\
\Big\|
	u_\ep'(t)
	+Ae^{-tA}u_0-\ep \Big(\big(-tA^2e^{-tA}u_0\big)'+e^{-\frac{t}{\ep}}A^2u_0\Big)
\Big\|
\leq 
C\ep^{3/2}\|A^{5/2}u_0\|
\label{eq:cor2}
\end{gather}
for every $t\geq 0$.
\end{corollary}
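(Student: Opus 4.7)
The plan is to obtain both estimates as direct applications of Theorem \ref{thm:main}, using \eqref{IL0} in a different way for each part.

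For \eqref{eq:cor1}, first I would substitute the initial layer condition $u_1=-Au_0$ directly into the conclusion of Theorem \ref{thm:main}. Under this condition the combination $Au_0+u_1$ vanishes, so the two terms $e^{-tA}(Au_0+u_1)$ and $e^{-t/\ep}(Au_0+u_1)$ drop out of the expansion, while the term $-tA^2e^{-tA}u_0$ survives; on the right-hand side the factor $\|A^{1/2}(Au_0+u_1)\|$ also vanishes, leaving only $\|A^{3/2}u_0\|$. This yields \eqref{eq:cor1} immediately, provided $(u_0,u_1)\in D(A^{3/2})\times D(A^{1/2})$, which holds because $u_0\in D(A^{5/2})\subset D(A^{3/2})$ and $u_1=-Au_0\in D(A^{3/2})\subset D(A^{1/2})$.

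For \eqref{eq:cor2}, the key observation is already recorded in the text: under \eqref{IL0}, the function $w_\ep(t)=u_\ep'(t)$ solves the same abstract equation \eqref{ADW-deri} with new initial data $(\tilde u_0,\tilde u_1)=(-Au_0,0)$. I would therefore apply Theorem \ref{thm:main} to $w_\ep$ in place of $u_\ep$, which requires $(\tilde u_0,\tilde u_1)\in D(A^{3/2})\times D(A^{1/2})$, that is, $u_0\in D(A^{5/2})$, which is exactly our hypothesis. Plugging in $\tilde u_0=-Au_0$, $\tilde u_1=0$, the expression $A\tilde u_0+\tilde u_1$ becomes $-A^2u_0$, so the expansion from \eqref{eq:mainthm} gives
\begin{equation*}
\Bigl\| u_\ep'(t)+Ae^{-tA}u_0-\ep\bigl(-A^2e^{-tA}u_0+tA^3e^{-tA}u_0+e^{-t/\ep}A^2u_0\bigr)\Bigr\|\leq C\ep^{3/2}\|A^{5/2}u_0\|,
\end{equation*}
where the right-hand side comes from $\|A^{3/2}\tilde u_0\|=\|A^{5/2}u_0\|$ and $\|A^{1/2}(A\tilde u_0+\tilde u_1)\|=\|A^{5/2}u_0\|$.

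The only remaining step is the routine identification
\begin{equation*}
\bigl(-tA^2e^{-tA}u_0\bigr)'=-A^2e^{-tA}u_0+tA^3e^{-tA}u_0,
\end{equation*}
which is valid on $D(A^{5/2})$ by the standard calculus of the analytic semigroup $e^{-tA}$. Substituting this identity into the previous display produces \eqref{eq:cor2} verbatim. I expect no genuine obstacle beyond bookkeeping: both estimates are corollaries of the expansion in Theorem \ref{thm:main}, and the main point is to verify that the regularity assumption $u_0\in D(A^{5/2})$ is exactly what is needed for the theorem to apply to the shifted problem \eqref{ADW-deri}.
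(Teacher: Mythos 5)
Your proposal is correct and is exactly the argument the paper intends: the text preceding the corollary introduces $w_\ep=u_\ep'$ as the solution of \eqref{ADW-deri} with data $(-Au_0,0)$ and states the corollary as a direct application of Theorem \ref{thm:main}, which is precisely what you carry out (the paper gives no further written proof). Your substitutions, the regularity check $u_0\in D(A^{5/2})$, and the identity $\bigl(-tA^2e^{-tA}u_0\bigr)'=-A^2e^{-tA}u_0+tA^3e^{-tA}u_0$ are all correct.
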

\begin{remark}
As explained above, in the case of \eqref{IL0} 
the factor of initial layer does not appear in the first asymptotics of $u_\ep(t)$. 
However, Corollary \ref{cor:uep'} points out that 
the asymptotic expansion for $u_\ep'(t)$ 
has an effect of initial layer $e^{-\frac{t}{\ep}} A^2u_0$.
This kind of phenomenon is not known so far.
\end{remark}
Let us describe our idea to make this paper. A main tool is the classical energy method, 
which is widely applied in the hyperbolic equation field. 
However, a definitive idea is used to obtain several asymptotic estimates 
in terms of $H$-norm for the solution $u_{\ep}(t)$ itself, 
not for $u_{\ep}'(t)$ and $A^{1/2}u_{\ep}(t)$ and so on. 
For this purpose, for example, one of ideas is to use the modified version of 
the function $Z(t)$ defined in \eqref{ikenishi03} (see Lemma \ref{lem:uep-v} below):
\begin{equation}\label{Sobajimafunc1}
\tilde{U}(t) := -\ep J_{\ep}u_{1} + \frac{1}{\ep}\int_{0}^{t}\left(u_{\ep}(s)-e^{-sA}(u_{0}+\ep J_{\ep}u_{1})\right)ds,
\end{equation} 
where $J_{\ep} := (I + \ep A)^{-1}$ is the resolvent of $A$. 
In this paper, this type of new functions play crucial roles to apply 
the classical energy method, 
and a hint concerning how to use the function \eqref{Sobajimafunc1} 
has already been introduced in a quite recent paper written by Sobajima \cite{So_AE}. 
In \cite{So_AE}, the author has applied the previous function $Z(t)$ of \eqref{ikenishi03} to the "damped wave" equation to get
\begin{equation}\label{Sobajimafunc2}
Z''(t) + AZ(t) + Z'(t) = A e^{-tA}(u_0 + u_1),
\quad t\geq 0,
\end{equation}
and by developing the energy method to the equation (\ref{Sobajimafunc2}), 
Sobajima \cite{So_AE} has derived precise asymptotic expansions of 
the solution $u(t)$ to problem \eqref{ADW-0} (as $t \to \infty$) 
in abstract framework which shows a diffusion phenomenon, 
while the authors of \cite{Ikehata2003} and/or \cite{IkNi2003} 
applied the function $Z(t)$ to the "parabolic" equations such that
\[
Z'(t) + AZ(t) = -u'(t), 
\quad t> 0.
\]
This has produced a big difference on the results of the diffusion phenomenon. In any case, an idea of the new function $\tilde{U}(t)$ of \eqref{Sobajimafunc1} of this paper has its origin in \cite{So_AE}. 

This paper is organized as follows. In section \ref{sec:prelim}, we prepare two known results to use in the later section. In section \ref{sec:alt_proof} we give alternative proofs for the results due to Kisynski \cite{Kisynski1963}, 
and in section \ref{sec:asym} the complete proof of Theorem \ref{thm:main} will be given by basing on the energy method.

%%%%%%%%%%%%%%%%%%%%%%%%%%%%%%%%%%%%%%%%%%%%%%%%%%%%%%%%%%%
%%%%%%                                               %%%%%%
%                                                         %
%           Section 2 : Preliminaries 
%                                                         %
%%%%%%                                               %%%%%%
%%%%%%%%%%%%%%%%%%%%%%%%%%%%%%%%%%%%%%%%%%%%%%%%%%%%%%%%%%%
\section{Preliminaries}\label{sec:prelim}

In this section, one shall prepare useful lemmas which will be frequently used throughout of this paper.

To begin with, the following lemma is one of the maximal regularity 
of the $C_0$-semigroup $\{e^{-tA}\}_{t\geq 0}$ (for the proof, see e.g., {Sobajima \cite{So_AE}}).
\begin{lemma}\label{lem:max-reg}
If $f\in H$, then for every $n\in \N\cup\{0\}$ and $t\geq 0$, 
\begin{equation}\label{eq:lem1-1}
\frac{\|e^{-tA}f\|^2}{2}
+
\frac{2^{n}}{n!}\int_0^t s^{n}\|A^{\frac{n+1}{2}}e^{-sA}f\|^2\,ds=\frac{\|f\|^2}{2}.
\end{equation}
\end{lemma}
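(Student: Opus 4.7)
My plan is to argue by induction on $n \in \N\cup\{0\}$, pivoting on the elementary differential relation
\[
\|A^{(n+1)/2}e^{-sA}f\|^2=-\tfrac{1}{2}\tfrac{d}{ds}\|A^{n/2}e^{-sA}f\|^2,
\]
which follows immediately from self-adjointness of $A^{n/2}$ together with $\partial_s e^{-sA}f=-Ae^{-sA}f$.

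For the base case $n=0$, the claim is just the standard energy equality for the semigroup $\{e^{-sA}\}_{s\geq 0}$: differentiating $\|e^{-sA}f\|^2/2$ in $s$ gives $-\|A^{1/2}e^{-sA}f\|^2$, and integrating from $0$ to $t$ produces \eqref{eq:lem1-1} at $n=0$. Even for $f\in H$ only, the analytic smoothing of the self-adjoint semigroup puts $e^{-sA}f\in D(A^{1/2})$ for every $s>0$ and guarantees continuity up to $s=0$, so the manipulation is legal.

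For the inductive step, I assume \eqref{eq:lem1-1} at level $n-1$, insert the differential relation above into $\frac{2^n}{n!}\int_0^t s^n\|A^{(n+1)/2}e^{-sA}f\|^2\,ds$, and integrate by parts in $s$. The weight $s^n$ annihilates the boundary term at $s=0$, and the remaining bulk term $\frac{2^{n-1}}{(n-1)!}\int_0^t s^{n-1}\|A^{n/2}e^{-sA}f\|^2\,ds$ is precisely what the inductive hypothesis controls, so rearranging gives the identity at level $n$.

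The main point to be careful with is the legality of the IBP and the termwise differentiation when $f$ only lies in $H$ and need not belong to any domain of fractional powers of $A$. This is resolved by the standard smoothing of the self-adjoint semigroup: for every $s>0$, $e^{-sA}f\in\bigcap_{k\geq 0}D(A^k)$ and $s\mapsto \|A^{n/2}e^{-sA}f\|^2$ is $C^{\infty}$ on $(0,\infty)$, so one performs the IBP on $[\delta,t]$ and lets $\delta\downarrow 0$, with integrability at the origin furnished by the factor $s^n$. Alternatively, one may diagonalize $A$ via the spectral theorem and reduce \eqref{eq:lem1-1} to a scalar identity involving $\int_0^t s^n\lambda^{n+1}e^{-2s\lambda}\,ds$, which follows by iterated integration by parts in $s$ and Fubini in the spectral variable.
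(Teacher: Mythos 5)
Your differential relation $\|A^{(n+1)/2}e^{-sA}f\|^2=-\tfrac12\tfrac{d}{ds}\|A^{n/2}e^{-sA}f\|^2$ and your base case $n=0$ are both correct, and your smoothing argument legitimizes the manipulations for general $f\in H$. The gap is in the inductive step: the integration by parts
\[
\frac{2^{n}}{n!}\int_0^t s^{n}\Big(-\tfrac12\tfrac{d}{ds}\|A^{n/2}e^{-sA}f\|^2\Big)ds
=-\frac{2^{n-1}}{n!}\Big[s^{n}\|A^{n/2}e^{-sA}f\|^2\Big]_{s=0}^{s=t}
+\frac{2^{n-1}}{(n-1)!}\int_0^t s^{n-1}\|A^{n/2}e^{-sA}f\|^2\,ds
\]
produces a boundary term not only at $s=0$ (which the weight $s^n$ does kill) but also at $s=t$, namely $-\frac{2^{n-1}}{n!}\,t^{n}\|A^{n/2}e^{-tA}f\|^2$, which you silently discard. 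Keeping it, your induction yields
\[
\frac{\|e^{-tA}f\|^2}{2}+\frac{2^{n}}{n!}\int_0^t s^{n}\|A^{\frac{n+1}{2}}e^{-sA}f\|^2\,ds
=\frac{\|f\|^2}{2}-\frac{2^{n-1}}{n!}\,t^{n}\|A^{n/2}e^{-tA}f\|^2-\cdots\ \leq\ \frac{\|f\|^2}{2},
\]
an inequality rather than the stated equality. This is not a fixable oversight in your argument: the equality in \eqref{eq:lem1-1} is actually false for $n\geq 1$. Take $H=\R$ and $A=\lambda>0$; then for $n=1$ the left-hand side equals $\tfrac12-t\lambda e^{-2t\lambda}<\tfrac12$ for $t>0$. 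Your proposed spectral-theorem route runs into exactly the same missing boundary term in the scalar integration by parts.

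So the correct conclusion of your (otherwise sound) induction is the inequality version of the lemma, or, if you retain all boundary terms, the identity
$\tfrac12\|e^{-tA}f\|^2+\sum_{k=1}^{n}\tfrac{2^{k-1}}{k!}t^{k}\|A^{k/2}e^{-tA}f\|^2+\tfrac{2^{n}}{n!}\int_0^t s^{n}\|A^{(n+1)/2}e^{-sA}f\|^2\,ds=\tfrac12\|f\|^2$. Note that the paper itself gives no proof (it defers to \cite{So_AE}), and every later invocation of Lemma \ref{lem:max-reg} (in Lemmas \ref{lem:ee-U1}, \ref{lem:ee-Zep}, \ref{lem:ee-W1ep} and in the proof of Theorem \ref{thm:main}) uses only the bound $\frac{2^{n}}{n!}\int_0^\infty s^{n}\|A^{(n+1)/2}e^{-sA}f\|^2\,ds\leq \frac{\|f\|^2}{2}$, so the ``$=$'' in the statement should be read as ``$\leq$''; your argument, with the boundary term acknowledged, proves exactly what is needed.
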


We also rely on the following elementary lemma. For this lemma we set
\[ J_\ep = (1+\ep A)^{-1},\]
which implies the resolvent operator of $A$.

\begin{lemma}\label{eq:lem1-1-2}
Let $\ep > 0$. Then it is true that
\[\|A^{1/2}J_\ep f\|^2\leq \frac{1}{\ep}\|f\|^2.\] 
for all $f \in H$.
\end{lemma}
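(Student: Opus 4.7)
The plan is to reduce the bound to a one-line computation using the algebraic identity
\[
\ep A J_\ep = I - J_\ep,
\]
which follows immediately from $J_\ep=(I+\ep A)^{-1}$. Since $A$ is nonnegative self-adjoint, $J_\ep$ is a bounded self-adjoint contraction on $H$ with $\mathrm{Range}(J_\ep)\subset D(A)\subset D(A^{1/2})$, so all the following manipulations make sense for arbitrary $f\in H$.

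I would first write
\[
\ep\,\|A^{1/2}J_\ep f\|^2 = \ep(AJ_\ep f,J_\ep f) = ((I-J_\ep)f,J_\ep f) = (f,J_\ep f) - \|J_\ep f\|^2,
\]
using self-adjointness of $A^{1/2}$ in the first equality and the identity above in the second. Then the contractivity $\|J_\ep\|\le 1$ combined with Cauchy--Schwarz gives
\[
(f,J_\ep f) - \|J_\ep f\|^2 \;\le\; (f,J_\ep f) \;\le\; \|f\|\cdot\|J_\ep f\| \;\le\; \|f\|^2,
\]
and dividing by $\ep$ yields the desired inequality.

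As a sanity check, I would also mention the spectral-theorem route: if $\{E_\lambda\}$ denotes the spectral resolution of $A$, then
\[
\|A^{1/2}J_\ep f\|^2 = \int_0^\infty \frac{\lambda}{(1+\ep\lambda)^2}\,d\|E_\lambda f\|^2,
\]
and the elementary pointwise bound $\lambda/(1+\ep\lambda)^2\le 1/(4\ep)\le 1/\ep$ (maximizing in $\lambda$, or AM--GM applied to $(1+\ep\lambda)^2\ge 4\ep\lambda$) gives the claim at once, in fact with a better constant than stated. There is no genuine obstacle here; the lemma is a quantitative restatement of the fact that the Yosida approximation $A J_\ep$ gains a factor $\ep^{-1/2}$ in $A^{1/2}$-smoothing, and it is recorded only for use as a black box in the subsequent energy estimates.
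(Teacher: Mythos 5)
Your argument is correct and is essentially the paper's own proof in different notation: pairing the resolvent identity $w+\ep Aw=f$ (equivalently $\ep AJ_\ep=I-J_\ep$) with $J_\ep f$ and applying Cauchy--Schwarz together with the contractivity of $J_\ep$. The spectral-theorem aside, giving the sharper constant $1/(4\ep)$, is a nice sanity check but not needed anywhere in the paper.
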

\begin{proof}
Let $f \in H$ and put $w=J_\ep f\in D(A)$. 
Then, by definition $w$ satisfies
\[w+\ep Aw=f.\] 
Making an inner product of this equality by $w$, we can deduce
\begin{align*}
\|w\|^2+\ep\|A^{1/2}w\|^2
&=(w,w)+\ep(A^{1/2}w,A^{1/2}w)
\\
&=(w+\ep A w,w)
\\
&\leq \|f\|\|w\|.
\end{align*}
This provides $\|w\|\leq \|f\|$ 
and then $\ep \|A^{1/2}w\|^2\leq \|f\|^2$
which implies the desired estimate.
\end{proof}

%%%%%%%%%%%%%%%%%%%%%%%%%%%%%%%%%%%%%%%%%%%%%%%%%%%%%%%%%%%
%%%%%%                                               %%%%%%
%                                                         %
%           Section 3 : 
%                                                         %
%%%%%%                                               %%%%%%
%%%%%%%%%%%%%%%%%%%%%%%%%%%%%%%%%%%%%%%%%%%%%%%%%%%%%%%%%%%
\section{An alternative proof of the result by Kisynski}\label{sec:alt_proof}

In this section, we provide an alternative proof 
of the result by Kisynski (Proposition \ref{thm:kisynski}) 
from the viewpoint of the energy method.
We divide the proof into two cases  
$(u_0,u_1)\in D(A^{1/2})\times H$ and $(u_0,u_1)\in D(A)\times H$, which correspond to each results (i) and (ii) of Theorem 1.1, respectively. 

The first proposition is for the case $(u_0,u_1)\in D(A^{1/2})\times H$. 
\begin{proposition}[{Kisynski \cite{Kisynski1963}}]\label{prop:Kisynski1}
If $(u_0,u_1)\in D(A^{1/2})\times H$, then there exists a positive constant 
$C>0$ such that the unique solution $u_{\ep}(t)$ to problem \eqref{ADW}
 satisfies
\[
\|u_\ep(t)-e^{-tA}u_0\|\leq C\Big(\ep^{1/2}\|A^{1/2}u_0\|+\ep\|u_1\|\Big).
\]
\end{proposition}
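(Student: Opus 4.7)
The plan is to adapt the integrated-unknown trick from Sobajima \cite{So_AE} by working with the modified primitive $\tilde U(t)$ defined in \eqref{Sobajimafunc1}. The design choice is to subtract $e^{-sA}(u_0+\ep J_\ep u_1)$ (rather than $e^{-sA}u_0$) inside the integral: the regularized piece $\ep J_\ep u_1\in D(A)$ absorbs the portion of the initial layer carried by $u_1$ and avoids any appeal to $Au_0$, which is forbidden under $(u_0,u_1)\in D(A^{1/2})\times H$.

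First I would record the elementary identities $\tilde U(0)=-\ep J_\ep u_1$, $\tilde U'(0)=-J_\ep u_1$, and $\ep\tilde U'(t)=u_\ep(t)-e^{-tA}(u_0+\ep J_\ep u_1)$. Rewriting the last one as
\begin{equation*}
u_\ep(t)-e^{-tA}u_0=\ep\tilde U'(t)+\ep e^{-tA}J_\ep u_1
\end{equation*}
reduces the whole job to bounding $\|\tilde U'(t)\|$ by $C\bigl(\ep^{-1/2}\|A^{1/2}u_0\|+\|u_1\|\bigr)$. Next I would derive the equation
\begin{equation*}
\ep\tilde U''(t)+A\tilde U(t)+\tilde U'(t)=Ae^{-tA}(u_0+\ep J_\ep u_1),
\end{equation*}
by integrating \eqref{ADW} once on $[0,t]$ and using $\ep AJ_\ep=I-J_\ep$ to obtain the first-order relation $A\tilde U(t)+\tilde U'(t)=-u_\ep'(t)$, and then eliminating $u_\ep'(t)$ through the derivative of $\ep\tilde U'(t)=u_\ep(t)-e^{-tA}(u_0+\ep J_\ep u_1)$.

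The energy step is then standard: take the inner product with $\tilde U'(t)$ to obtain
\begin{equation*}
\frac{d}{dt}\Big[\frac{\ep}{2}\|\tilde U'(t)\|^2+\frac{1}{2}\|A^{1/2}\tilde U(t)\|^2\Big]+\|\tilde U'(t)\|^2=\bigl(Ae^{-tA}(u_0+\ep J_\ep u_1),\tilde U'(t)\bigr),
\end{equation*}
apply Young's inequality on the right to absorb half of $\|\tilde U'\|^2$, and integrate in time. The forcing integral is handled by factoring $Ae^{-sA}(u_0+\ep J_\ep u_1)=A^{1/2}e^{-sA}\bigl(A^{1/2}u_0+\ep A^{1/2}J_\ep u_1\bigr)$ and invoking Lemma \ref{lem:max-reg} with $n=0$, which bounds $\int_0^t\|Ae^{-sA}(u_0+\ep J_\ep u_1)\|^2\,ds$ by $\frac{1}{2}\|A^{1/2}u_0+\ep A^{1/2}J_\ep u_1\|^2$; Lemma \ref{eq:lem1-1-2} then yields $\ep^2\|A^{1/2}J_\ep u_1\|^2\leq \ep\|u_1\|^2$. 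The initial energy $\frac{\ep}{2}\|J_\ep u_1\|^2+\frac{\ep^2}{2}\|A^{1/2}J_\ep u_1\|^2$ is controlled by $\ep\|u_1\|^2$ in exactly the same way, and collecting terms gives the desired bound on $\|\tilde U'(t)\|^2$.

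The main obstacle is not the energy argument but the design of $\tilde U$: the shift by $\ep J_\ep u_1$ must be chosen so that simultaneously (a) the forcing in the equation for $\tilde U$ is $Ae^{-tA}f$ with $f\in D(A^{1/2})$ and only an $O(\sqrt{\ep})$ penalty from the $u_1$ component, and (b) the initial data $\tilde U(0),\tilde U'(0)$ contribute at most $O(\ep\|u_1\|^2)$ to the energy. Once this shift is pinned down, Lemmas \ref{lem:max-reg} and \ref{eq:lem1-1-2} do all the remaining quantitative work.
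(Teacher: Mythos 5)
Your proposal is correct and is essentially the paper's own argument: your $\tilde U$ is exactly the function \eqref{Sobajimafunc1}, which coincides with the paper's $U_{1\ep}$ (same equation $\ep U''+AU+U'=Ae^{-tA}(u_0+\ep J_\ep u_1)$, same data $(-\ep J_\ep u_1,-J_\ep u_1)$), and the energy estimate via Lemmas \ref{lem:max-reg} and \ref{eq:lem1-1-2} is identical. The only difference is presentational: you derive the equation for $\tilde U$ by integrating \eqref{ADW} directly, whereas the paper defines $U_{1\ep}$ as the solution of the auxiliary problem and justifies the identity $u_\ep=e^{-tA}(u_0+\ep J_\ep u_1)+\ep U_{1\ep}'$ through a $J_\ep$-regularized problem and uniqueness, which supplies the regularity your formal computation implicitly assumes.
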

\begin{remark}\label{Firstrem}
Under the assumption of Proposition \ref{prop:Kisynski1}, it is known that the problem (\ref{ADW}) has a unique (weak) solution $u_{\ep} \in C([0,\infty);D(A^{1/2})) \cap C^{1}([0,\infty);H)$ for each $\ep \in (0,1]$.
\end{remark}
To begin with, we give a decomposition of $u_\ep$ 
into solutions of the first and the second order differential equations. 
%The idea of this splitting framework 
%is originated from Morawetz \cite{Morawetz0000}. A similar treatment can be found in Sobajima \cite{So_AE}.

\begin{lemma}\label{lem:uep-v}
Assume that $(u_0,u_1)\in D(A^{1/2})\times H$. 
Let $\ep>0$ and 
let $U_{1\ep}$ be a unique solution of the following problem
\begin{align}\label{eq:aux_U1ep}
\begin{cases}
\ep U_{1\ep}''(t)+AU_{1\ep}(t)+U_{1\ep}'(t)=Ae^{-tA}(u_0+\ep J_\ep u_1), 
\quad t\geq 0,
\\
(U_{1\ep},U_{1\ep}')(0)=(-\ep J_\ep u_1, -J_\ep u_1).
\end{cases}
\end{align}
Then $u_\ep(t)=e^{-tA}(u_0+\ep J_\ep u_1)+\ep U_{1\ep}'(t)$. 
\end{lemma}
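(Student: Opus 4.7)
My plan is a direct verification combined with uniqueness. The idea is to set
\[
w(t) := e^{-tA}(u_0+\ep J_\ep u_1)+\ep U_{1\ep}'(t)
\]
and show that $w$ solves exactly the same Cauchy problem \eqref{ADW} as $u_\ep$; by the well-posedness recalled in Remark \ref{Firstrem}, this yields $w\equiv u_\ep$, which is the assertion.

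First I would handle the initial conditions. From $U_{1\ep}(0)=-\ep J_\ep u_1$ and $U_{1\ep}'(0)=-J_\ep u_1$ one reads off $w(0)=u_0+\ep J_\ep u_1-\ep J_\ep u_1=u_0$ immediately. For $w'(0)$ I would differentiate $w$ once, use \eqref{eq:aux_U1ep} to eliminate $\ep U_{1\ep}''(t)$, and observe the cancellation of the two $Ae^{-tA}(u_0+\ep J_\ep u_1)$ contributions; this leaves the compact formula $w'(t)=-AU_{1\ep}(t)-U_{1\ep}'(t)$. Evaluating at $t=0$ then gives $w'(0)=\ep AJ_\ep u_1+J_\ep u_1=(I+\ep A)J_\ep u_1=u_1$, which is precisely the definition of the resolvent $J_\ep$.

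Next I would verify the differential equation. Differentiating once more and assembling $\ep w''(t)+Aw(t)+w'(t)$, all terms involving $U_{1\ep}$ and its derivatives collect into $-[\ep U_{1\ep}''(t)+AU_{1\ep}(t)+U_{1\ep}'(t)]$, which by \eqref{eq:aux_U1ep} equals $-Ae^{-tA}(u_0+\ep J_\ep u_1)$; this cancels the sole remaining term coming from $Aw(t)$. Hence $\ep w''(t)+Aw(t)+w'(t)=0$, so $w$ solves \eqref{ADW}.

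I do not anticipate a genuine obstacle: the lemma is essentially a computational identity, and the required manipulations are already implicit in the very choice of the initial data of $U_{1\ep}$, which was designed precisely so that $w(0)$ and $w'(0)$ produce $u_0$ and $u_1$. The only mild point of care is to justify that \eqref{eq:aux_U1ep} admits a solution smooth enough for the above differentiations; this follows from standard $C_0$-semigroup theory for the damped second order equation applied to the forcing $Ae^{-tA}(u_0+\ep J_\ep u_1)$, which is well defined and smooth for $t>0$ by analyticity of $\{e^{-tA}\}_{t\geq 0}$ and has the time-integrability needed near $t=0$ since $u_0+\ep J_\ep u_1\in D(A^{1/2})$.
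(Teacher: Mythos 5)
Your computational skeleton is exactly the one the paper uses, and the algebra is right: with $w(t)=e^{-tA}(u_0+\ep J_\ep u_1)+\ep U_{1\ep}'(t)$ one gets $w'(t)=-AU_{1\ep}(t)-U_{1\ep}'(t)$, hence $w(0)=u_0$, $w'(0)=(I+\ep A)J_\ep u_1=u_1$, and $\ep w''+Aw+w'=0$. The gap is in the one step you dismiss as ``mild'': justifying these manipulations. Under the standing hypothesis $(u_0,u_1)\in D(A^{1/2})\times H$, the forcing $f(t)=Ae^{-tA}(u_0+\ep J_\ep u_1)$ satisfies only $\|f(t)\|\lesssim t^{-1/2}$ while $\|Af(t)\|,\|f'(t)\|\lesssim t^{-3/2}$ near $t=0$, so $f\notin L^1_{loc}([0,\infty);D(A))$ and $f\notin W^{1,1}_{loc}([0,\infty);H)$; the standard criteria for the inhomogeneous second order problem to have a strict solution therefore fail, and the damped wave propagator has no smoothing effect through which the analyticity of $e^{-tA}$ could be transferred. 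Yet your computation needs a lot: $AU_{1\ep}(t)$ must make sense pointwise to substitute the equation into $w'$, and $Aw(t)=Ae^{-tA}(\cdots)+\ep AU_{1\ep}'(t)$ even requires $U_{1\ep}'(t)\in D(A)$. None of this is available for $U_{1\ep}$ itself; moreover $u_\ep$ is only a weak solution in this regularity class (Remark \ref{Firstrem}), so ``solves the same Cauchy problem, hence equal by uniqueness'' must also be interpreted with care.

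The paper's proof is built precisely to close this gap: it applies $J_\ep=(I+\ep A)^{-1}$ to the whole problem, i.e.\ works with $U_*=J_\ep U_{1\ep}$, whose forcing $AJ_\ep e^{-tA}(\cdots)$ and initial data are one power of $A$ more regular; it then records that $U_*$ and $A^{1/2}U_*$ belong to $C^2([0,\infty);H)\cap C^1([0,\infty);D(A^{1/2}))\cap C([0,\infty);D(A))$, so every term in your computation is classical for the mollified objects, concludes $J_\ep u_\ep(t)=J_\ep\bigl(e^{-tA}(u_0+\ep J_\ep u_1)+\ep U_{1\ep}'(t)\bigr)$, and finally strips off $J_\ep$ by injectivity. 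You could instead regularize the initial data and pass to the limit, or integrate by parts in the Duhamel representation (writing $Ae^{-sA}g=-\tfrac{d}{ds}e^{-sA}g$) to recover the needed regularity of $U_{1\ep}$ by hand --- but some such device is required; ``standard semigroup theory plus analyticity of $e^{-tA}$'' does not supply it as stated.
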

\begin{proof}
We consider an auxiliary problem 
\begin{align*}
\begin{cases}
\ep U_{*}''(t)+AU_{*}(t)+U_{*}'(t)=AJ_\ep e^{-tA}(u_0+\ep J_\ep u_1), 
\quad t\geq 0,
\\
(U_{*},U_{*}')(0)=(-\ep J_\ep^2 u_1, -J_\ep^2 u_1).
\end{cases}
\end{align*}
By uniqueness of solutions, one has $U_{*}=J_\ep U_{1\ep}$. Moreover, 
it follows that
\begin{gather*}
U_{*}
\in C^2([0,\infty);H)\cap C^1([0,\infty);D(A^{1/2}))\cap C([0,\infty);D(A)),
\\
A^{1/2}U_{*}
\in C^2([0,\infty);H)\cap C^1([0,\infty);D(A^{1/2}))\cap C([0,\infty);D(A)), 
\end{gather*}
and therefore, one has $AU_{*}\in C^1([0,\infty);H)$.
 
Put $w(t)=J_\ep e^{-tA}(u_0+\ep J_\ep u_1)+\ep U_*'(t)$. 
Then one can check 
\begin{align*}
w(0)&=J_\ep(u_0+\ep J_\ep u_1)+\ep(-J_\ep^2 u_1)
\\
&=J_\ep u_0,
\end{align*}
and 
\begin{align*}
w'(t)
&=-AJ_\ep e^{-tA}(u_0+\ep J_\ep u_1)+U_*''(t)
\\
&=-AU_*(t)-U_*'(t).
\end{align*}
Therefore $w'(0)=-A(-\ep J_\ep^2 u_1)-(-J_\ep^2 u_1)=J_\ep u_1$ and 
\begin{align*}
\ep w''(t)
&=-\ep AU_*'(t)-\ep U_*''(t)
\\
&=-\ep AU_*'(t)-AJ_\ep e^{-tA}(u_0+\ep J_\ep u_1)+AU_*(t)+U_*'(t)
\\
&=-\ep AU_{1\ep}'(t)-AJ_\ep e^{-tA}(u_0+\ep J_\ep u_1)-w'(t)
\\
&=-Aw(t)-w'(t).
\end{align*}
These imply $w(t)=J_\ep u_{\ep}(t)$. Combining this identity and 
\[
w(t)=J_\ep e^{-tA}(u_0+\ep J_\ep u_1)+\ep U_*'(t), \quad U_{*}=J_\ep U_{1\ep},
\]
one can conclude that $u_{\ep}(t)=e^{-tA}(u_0+\ep J_\ep u_1)+\ep U_{1\ep}'(t)$ because of the injectivity of $J_\ep$. 
\end{proof}
We derive an energy estimate for $U_{1\ep}'(t)$ 
which will be translated in terms of $u_{\ep}(t)$. 
\begin{lemma}\label{lem:ee-U1}
Let $(u_0,u_1)\in D(A^{1/2})\times H$. Then it holds that
\begin{gather*}
\ep \|U_{1\ep}'(t)\|^2+\|A^{1/2}U_{1\ep}(t)\|^2
+
\int_0^t\|U_{1\ep}'(s)\|^2\,ds
\leq 
\|A^{1/2}u_0\|^2+3\ep\|u_1\|^2.
\end{gather*}
\end{lemma}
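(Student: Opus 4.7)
The plan is to apply the standard second-order energy identity to \eqref{eq:aux_U1ep} by taking the inner product with the multiplier $U_{1\ep}'(t)$. Writing $g:=u_0+\ep J_\ep u_1$ for brevity, this yields
\[
\frac{d}{dt}\Big[\tfrac{\ep}{2}\|U_{1\ep}'(t)\|^2+\tfrac{1}{2}\|A^{1/2}U_{1\ep}(t)\|^2\Big]+\|U_{1\ep}'(t)\|^2=\big(Ae^{-tA}g,\,U_{1\ep}'(t)\big).
\]
The RHS is the nontrivial term. I would handle it by Young's inequality in the form $(Ae^{-tA}g,U_{1\ep}'(t))\leq \frac{1}{2}\|Ae^{-tA}g\|^2+\frac{1}{2}\|U_{1\ep}'(t)\|^2$, so that half of the dissipation $\|U_{1\ep}'\|^2$ is absorbed into the LHS. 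Integrating on $[0,t]$ then gives
\[
\ep\|U_{1\ep}'(t)\|^2+\|A^{1/2}U_{1\ep}(t)\|^2+\int_0^t\|U_{1\ep}'(s)\|^2\,ds\leq \ep\|U_{1\ep}'(0)\|^2+\|A^{1/2}U_{1\ep}(0)\|^2+\int_0^t\|Ae^{-sA}g\|^2\,ds.
\]

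Next I would estimate the forcing integral via the maximal regularity of Lemma \ref{lem:max-reg}. Applied with $n=0$ to the element $A^{1/2}g\in H$ it yields
\[
\int_0^t\|Ae^{-sA}g\|^2\,ds=\int_0^t\|A^{1/2}e^{-sA}(A^{1/2}g)\|^2\,ds\leq \tfrac{1}{2}\|A^{1/2}g\|^2.
\]
By the definition of $g$ and Lemma \ref{eq:lem1-1-2}, $\|A^{1/2}g\|^2\leq 2\|A^{1/2}u_0\|^2+2\ep^2\|A^{1/2}J_\ep u_1\|^2\leq 2\|A^{1/2}u_0\|^2+2\ep\|u_1\|^2$, so $\int_0^t\|Ae^{-sA}g\|^2\,ds\leq \|A^{1/2}u_0\|^2+\ep\|u_1\|^2$. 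Justifying the membership $A^{1/2}g\in H$ (needed to apply Lemma \ref{lem:max-reg}) is exactly where Lemma \ref{eq:lem1-1-2} is essential, since only $u_1\in H$ is assumed.

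Finally, the initial-data contribution is controlled by noting $\|U_{1\ep}'(0)\|=\|J_\ep u_1\|\leq \|u_1\|$ and $\|A^{1/2}U_{1\ep}(0)\|^2=\ep^2\|A^{1/2}J_\ep u_1\|^2\leq \ep\|u_1\|^2$ by Lemma \ref{eq:lem1-1-2} again. Adding these two contributions gives $\ep\|U_{1\ep}'(0)\|^2+\|A^{1/2}U_{1\ep}(0)\|^2\leq 2\ep\|u_1\|^2$, and combining with the previous step delivers exactly the announced bound with constants $1$ and $3$. The main obstacle in this plan is purely technical: tracking the constants carefully in the splittings of $g$ and of the initial data so that the final bound really comes out as $\|A^{1/2}u_0\|^2+3\ep\|u_1\|^2$ rather than a looser multiple.
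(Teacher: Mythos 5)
Your proposal is correct and follows essentially the same route as the paper: multiply \eqref{eq:aux_U1ep} by $U_{1\ep}'$, absorb half the dissipation via Young's inequality, bound the forcing integral by Lemma \ref{lem:max-reg} with $n=0$, and control both the splitting of $g=u_0+\ep J_\ep u_1$ and the initial data $(-\ep J_\ep u_1,-J_\ep u_1)$ via Lemma \ref{eq:lem1-1-2} (including $\|J_\ep u_1\|\leq\|u_1\|$). The constant bookkeeping matches the paper's and yields exactly $\|A^{1/2}u_0\|^2+3\ep\|u_1\|^2$.
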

\begin{proof}
Taking the inner product of the both sides of \eqref{eq:aux_U1ep} by $U_{1\ep}'$ one has
\begin{align*}
\frac{d}{dt}
\Big(\ep \|U_{1\ep}'(t)\|^2+\|A^{1/2}U_{1\ep}(t)\|^2\Big)
+2\|U_{1\ep}'(t)\|^2
&=
2(U_{1\ep}'(t),\ep U_{1\ep}''(t)+AU_{1\ep}(t)+U_{1\ep}'(t))
\\
&= 
2(U_{1\ep}'(t), Ae^{-tA}(u_0+\ep J_\ep u_1))
\\
&\leq 
\|U_{1\ep}'(t)\|^2+\|Ae^{-tA}(u_0+\ep J_\ep u_1)\|^2
\end{align*}
and therefore it holds that 
\[
\frac{d}{dt}
\Big(\ep \|U_{1\ep}'(t)\|^2+\|A^{1/2}U_{1\ep}(t)\|^2\Big)
+\|U_{1\ep}'(t)\|^2
\leq 
\|Ae^{-tA}(u_0+\ep J_\ep u_1)\|^2. 
\]
In view of Lemma \ref{lem:max-reg} with $n=0$, 
one also has
\begin{align*}
\int_0^\infty
\|Ae^{-tA}(u_0+\ep J_\ep u_1)\|^2
\,dt
&
\leq \frac{1}{2}\|A^{1/2}(u_0+\ep J_\ep u_1)\|^2
\\
&\leq 
\Big(
\|A^{1/2}u_0\|^2+\ep^2\|A^{1/2}J_\ep u_1\|^2
\Big).
\end{align*}
Noting 
\begin{align*}
\ep \|U_{1\ep}'(0)\|^2+\|A^{1/2}U_{1\ep}(0)\|^2
&=
\ep \|J_\ep u_1\|^2+\ep^2\|A^{1/2}J_\ep u_1\|^2,
\end{align*}
one can obtain the desired inequality because of  Lemma \ref{eq:lem1-1-2}.
\end{proof}

Now we are in a position to prove Proposition \ref{prop:Kisynski1}.\\
\begin{proof}[Proof of Proposition \ref{prop:Kisynski1}]
By virtue of Lemmas \ref{lem:uep-v} and \ref{lem:ee-U1}, one has a series of inequalities
\begin{align*}
\|u_\ep(t)-e^{-tA}u_0\|
&=
\|\ep e^{-tA}J_\ep u_1+\ep U_{1\ep}'(t)\|
\\
&\leq 
\ep\|u_1\|+\ep \|U_{1\ep}'(t)\|
\\
&\leq 
\ep\|u_1\|+\ep^{1/2}\Big(\|A^{1/2}u_0\|^2+3\ep\|u_1\|^2\Big)^{1/2}, 
\end{align*}
which implies the desired estimate.
\end{proof}

\begin{remark}
If we use the $L^2$-estimates of $\|U_{1\ep}'(\cdot)\|^2$ in Lemma \ref{lem:ee-U1}, and Lemmas \ref{eq:lem1-1} and \ref{eq:lem1-1-2} one can get
\begin{align*}
\int_{0}^\infty\|u_\ep(t)-e^{-tA}(u_0+\ep u_1)\|^2\,dt
&=
\int_{0}^\infty\|\ep^2Ae^{-tA}J_\ep u_1-\ep U_{1\ep}'(t)\|^2\,dt
\\
&\leq 
2\ep^4
\int_{0}^\infty\|Ae^{-tA}J_\ep u_1\|^2\,dt
+
2\ep^2
\int_{0}^\infty\|U_{1\ep}'(t)\|^2\,dt
\\
&\leq 
\ep^4\|A^{1/2}J_\ep u_1\|^2
+
2\ep^2\|A^{1/2}u_0\|^2+6\ep^3\|u_1\|^2
\\
&\leq 
2\varepsilon^2\|A^{1/2}u_0\|^2+7\ep^3\|u_1\|^2.
\end{align*}
Since $e^{-tA}u_1\notin L^2(0,\infty;H)$ in general, 
the above estimate cannot be improved 
to the one for $u_\ep(t)-e^{-tA}u_0$. 
On the one hand, if $u_1=A^{1/2}w_1$ for some $w_1\in D(A^{1/2})$, 
from Lemma \ref{eq:lem1-1} one can find the estimate
\[
\int_{0}^\infty\|e^{-tA}u_1\|^2\,dt\leq \frac{\|w_1\|^2}{2},
\]
and hence one can arrive at the following estimate, which is an improvement of \cite{Ikehata2003} with $w_{1} := A^{1/2}(-u_{0})$
\[
\int_{0}^\infty\|u_\ep(t)-e^{-tA}u_0\|^2\,dt
\leq C
\Big(
\ep^2\|w_1\|^2+\ep^2\|A^{1/2}u_0\|^2+\ep^3\|u_1\|^2\Big).
\]
\end{remark}

Next we discuss an alternative proof for (ii) of Theorem \ref{thm:kisynski}. 
The statement is as follows again (see Remark \ref{Firstrem}). 
\begin{proposition}[{Kisynski \cite{Kisynski1963}}]\label{prop:Kisynski2}
If $(u_0,u_1)\in D(A)\times H$, then there exists a positive constant 
$C>0$ such that the unique solution $u_{\ep}(t)$ to problem \eqref{ADW}  satisfies
\[
\|u_\ep(t)-e^{-tA}u_0\|\leq C\ep \Big(\|Au_0\|+\|u_1\|\Big).
\]
\end{proposition}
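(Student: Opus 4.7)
My plan is to follow the blueprint of Proposition \ref{prop:Kisynski1}. By Lemma \ref{lem:uep-v} we have the decomposition
\[
u_\ep(t)-e^{-tA}u_0=\ep e^{-tA}J_\ep u_1+\ep U_{1\ep}'(t),
\]
and since $\|\ep e^{-tA}J_\ep u_1\|\leq \ep\|u_1\|$ is immediate, the whole problem reduces to the \emph{pointwise}, $\ep$-uniform bound
$\|U_{1\ep}'(t)\|\leq C(\|Au_0\|+\|u_1\|)$. Note that this is a strictly stronger statement than Lemma \ref{lem:ee-U1}, which only gives $\|U_{1\ep}'(t)\|=O(\ep^{-1/2})$.

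The new ingredient, coming from the stronger hypothesis $u_0\in D(A)$, is that the right-hand side of \eqref{eq:aux_U1ep} is uniformly bounded in $H$. Using the resolvent identity $\ep AJ_\ep=I-J_\ep$ one rewrites
\[
Ae^{-tA}(u_0+\ep J_\ep u_1)=e^{-tA}Au_0+e^{-tA}(u_1-J_\ep u_1),
\]
whence $\|Ae^{-tA}(u_0+\ep J_\ep u_1)\|\leq \|Au_0\|+2\|u_1\|$ uniformly in $(t,\ep)$. This is a genuine upgrade over the $L^2_t$-bound employed in the proof of Lemma \ref{lem:ee-U1}, where we could only exploit the maximal regularity of $\{e^{-tA}\}$.

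To convert this bounded source into the required pointwise estimate on the derivative, I would split $U_{1\ep}=U_{1\ep}^{\mathrm{h}}+U_{1\ep}^{\mathrm{i}}$ into a homogeneous piece carrying the initial data $(-\ep J_\ep u_1,-J_\ep u_1)$ and an inhomogeneous piece with zero initial data driven by $f_\ep$. Proposition \ref{prop:Kisynski1} applied to $U_{1\ep}^{\mathrm{h}}$ (together with Lemma \ref{eq:lem1-1-2} to absorb the $\sqrt{\ep}$-factors coming from $A^{1/2}J_\ep u_1$) controls $(U_{1\ep}^{\mathrm{h}})'$ at cost $C\|u_1\|$. For $(U_{1\ep}^{\mathrm{i}})'$, the uniform $H$-bound on $f_\ep$ is combined with an energy functional tailored to extract pointwise control of the derivative — based for instance on a multiplier of the form $\ep U'+U$ and exploiting the fact that $f_\ep$ is a $C_0$-semigroup orbit of a fixed element of $H$ — yielding $\|(U_{1\ep}^{\mathrm{i}})'(t)\|\leq C\|Au_0\|$.

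The main obstacle is exactly this last energy step. The naive $U'$-multiplier gives only $\ep\|U'\|^2\leq\int_0^t\|f_\ep\|^2\,ds$, which under a pointwise (rather than $L^2_t$) bound on $f_\ep$ grows linearly in $t$ and is therefore not uniform in $\ep$. Breaking this requires exploiting both the strong dissipation of $\ep\,\cdot''+A\,\cdot+\,\cdot'$ and the explicit semigroup structure of $f_\ep$; once obtained, assembling the two pieces gives $\|u_\ep(t)-e^{-tA}u_0\|\leq \ep\|u_1\|+\ep\|U_{1\ep}'(t)\|\leq C\ep(\|Au_0\|+\|u_1\|)$, as required.
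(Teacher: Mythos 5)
There is a genuine gap, and you have in fact located it yourself: the entire proof hinges on a pointwise, $\ep$-uniform bound $\|(U_{1\ep}^{\mathrm{i}})'(t)\|\leq C\|Au_0\|$ for the inhomogeneous piece, and your proposal ends with ``once obtained\dots'' without obtaining it. The difficulty is real: the natural multiplier $U'$ applied to $\ep U''+AU+U'=f_\ep$ yields $\frac{d}{dt}(\ep\|U'\|^2+\|A^{1/2}U\|^2)+\|U'\|^2\leq\|f_\ep\|^2$, so a merely bounded source gives $\ep\|U'(t)\|^2\lesssim t\,\sup\|f_\ep\|^2$, which is useless, while the $L^2_t$ bound on $f_\ep=Ae^{-tA}(u_0+\ep J_\ep u_1)$ from maximal regularity only controls $\|A^{1/2}(u_0+\ep J_\ep u_1)\|$, reproducing the $\ep^{1/2}$ rate of Proposition \ref{prop:Kisynski1}. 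Observing that $f_\ep(t)=e^{-tA}g_\ep$ with $g_\ep=Au_0+(u_1-J_\ep u_1)$ bounded in $H$ does not by itself produce the derivative bound; some further structural argument is required, and none is supplied. The target estimate $\|U_{1\ep}'(t)\|\leq C(\|Au_0\|+\|u_1\|)$ is essentially equivalent to the proposition itself, so the proposal as written is circular at its core step. (A smaller inaccuracy: Proposition \ref{prop:Kisynski1} bounds $\|u_\ep(t)-e^{-tA}u_0\|$, not $\|u_\ep'(t)\|$, so it does not directly control $(U_{1\ep}^{\mathrm{h}})'$; what you actually need there is the monotonicity of the homogeneous energy $\ep\|u'\|^2+\|A^{1/2}u\|^2$ together with Lemma \ref{eq:lem1-1-2}, which does work.)

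The paper avoids the obstacle by never asking for a pointwise bound on a time derivative. It first splits $u_\ep=u_{1\ep}+u_{2\ep}$ with data $(u_0,-Au_0)$ and $(0,Au_0+u_1)$; the second piece is killed by Proposition \ref{prop:Kisynski1} at cost $C\ep\|Au_0+u_1\|$. For the first piece, the ``prepared'' initial velocity $-Au_0$ permits the decomposition $u_{1\ep}(t)=e^{-tA}u_0+\ep A^{1/2}Z_\ep(t)$ (Lemma \ref{lem:u2ep-v}), where $Z_\ep$ has \emph{zero} initial data and forcing $A^{3/2}e^{-tA}u_0$, which lies in $L^2(0,\infty;H)$ with norm $\leq\|Au_0\|/\sqrt{2}$ precisely because $u_0\in D(A)$. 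The error is now $\ep$ times $A^{1/2}Z_\ep(t)$ --- a quantity that sits \emph{inside} the energy $\ep\|Z_\ep'\|^2+\|A^{1/2}Z_\ep\|^2$ --- so the very same multiplier argument that fails for you delivers the pointwise bound $\|A^{1/2}Z_\ep(t)\|\leq\|Au_0\|/\sqrt 2$ immediately. If you want to salvage your route, you must either prove the uniform derivative bound for the source $e^{-tA}g_\ep$ by some genuinely new device, or switch to a decomposition in which the $O(\ep)$ correction is an energy-controlled quantity rather than a derivative.
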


To prove Proposition \ref{prop:Kisynski2}, 
we divide $u_{\ep}$ into two functions $u_{1\ep}$ and $u_{2\ep}$
which satisfy
\begin{equation}\label{ADW1}
\begin{cases}
\ep u_{1\ep}''(t)+Au_{1\ep}(t)+u_{1\ep}'(t)=0,
\quad t\geq 0,
\\
(u_{1\ep},u_{1\ep}')(0)=(u_0,-Au_0),
\end{cases}
\end{equation}
and 
\begin{equation}\label{ADW2}
\begin{cases}
\ep u_{2\ep}''(t)+Au_{2\ep}(t)+u_{2\ep}'(t)=0,
\quad t\geq 0,
\\
(u_{2\ep},u_{2\ep}')(0)=(0,Au_0+u_1). 
\end{cases}
\end{equation}
The decomposition for $u_{1\ep}$ is as follows. 
\begin{lemma}\label{lem:u2ep-v}
Assume that $u_0\in D(A)$. 
Let $\ep>0$ and let $Z_{\ep}$ be a unique solution of the following problem
\begin{align}\label{eq:aux_Zep}
\begin{cases}
\ep Z_{\ep}''(t)+AZ_{\ep}(t)+Z_{\ep}'(t)=A^{3/2}e^{-tA}u_0, 
\\
(Z_{\ep},Z_{\ep}')(0)=(0,0).
\end{cases}
\end{align}
Then $u_{1\ep}(t)=e^{-tA}u_0+\ep A^{1/2}Z_{\ep}(t)$. 
\end{lemma}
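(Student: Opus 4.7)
The plan is to verify directly, via uniqueness of solutions to \eqref{ADW1}, that the candidate function $v_\ep(t):=e^{-tA}u_0+\ep A^{1/2}Z_\ep(t)$ satisfies the Cauchy problem defining $u_{1\ep}$. The work splits naturally into a regularity step and a direct substitution.

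First I would address the regularity of $A^{1/2}Z_\ep$. Under the standing assumption $u_0\in D(A)$, the source $A^{3/2}e^{-tA}u_0=A^{1/2}e^{-tA}(Au_0)$ on the right-hand side of the equation for $Z_\ep$ is only guaranteed, via Lemma \ref{lem:max-reg} with $n=0$, to belong to $L^2(0,\infty;H)$, so a priori $A^{1/2}Z_\ep$ lacks the pointwise regularity needed for termwise substitution into \eqref{ADW1}. Following the device used in the proof of Lemma \ref{lem:uep-v}, I would introduce the auxiliary Cauchy problem with the smoother forcing $Ae^{-tA}u_0=e^{-tA}(Au_0)\in C([0,\infty);H)$,
\[
\ep \widetilde{Z}_\ep''(t)+A\widetilde{Z}_\ep(t)+\widetilde{Z}_\ep'(t)=Ae^{-tA}u_0,\quad (\widetilde{Z}_\ep,\widetilde{Z}_\ep')(0)=(0,0).
\]
Standard semigroup theory yields
\[
\widetilde{Z}_\ep\in C^2([0,\infty);H)\cap C^1([0,\infty);D(A^{1/2}))\cap C([0,\infty);D(A)).
\]
Since $A^{1/2}$ commutes with every quantity in \eqref{eq:aux_Zep}, uniqueness identifies $Z_\ep=A^{1/2}\widetilde{Z}_\ep$, so $A^{1/2}Z_\ep=A\widetilde{Z}_\ep$ is a bona fide element of $C^1([0,\infty);H)\cap C([0,\infty);D(A^{1/2}))$, and $v_\ep$ then has the regularity expected of a (weak) solution to \eqref{ADW1} (cf.\ Remark \ref{Firstrem}).

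Second, the matching of initial data and equation is then a direct computation. The initial conditions of $\widetilde{Z}_\ep$ (hence $Z_\ep$) give $v_\ep(0)=u_0$ and $v_\ep'(0)=-Au_0+\ep A^{1/2}Z_\ep'(0)=-Au_0$. Using $(e^{-tA}u_0)'=-Ae^{-tA}u_0$ and $(e^{-tA}u_0)''=A^2e^{-tA}u_0$, and collecting terms, I would obtain
\[
\ep v_\ep''(t)+Av_\ep(t)+v_\ep'(t)=\ep A^2e^{-tA}u_0+\ep A^{1/2}\bigl(\ep Z_\ep''(t)+AZ_\ep(t)+Z_\ep'(t)\bigr),
\]
which reduces to zero upon invoking the defining equation \eqref{eq:aux_Zep} for $Z_\ep$. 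Uniqueness of solutions to \eqref{ADW1} then yields $v_\ep\equiv u_{1\ep}$.

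The main obstacle is the regularity step: when $u_0$ lies only in $D(A)$, the source $A^{3/2}e^{-tA}u_0$ degenerates at $t=0$, so without the auxiliary-problem device the formal computation of $\ep v_\ep''+Av_\ep+v_\ep'$ cannot be made rigorous. Once the identification $A^{1/2}Z_\ep=A\widetilde{Z}_\ep$ is in place, the remainder of the argument is purely algebraic.
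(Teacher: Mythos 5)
Your overall strategy --- replace the singular forcing by a smoothed auxiliary problem, identify $Z_\ep$ with (an operator applied to) the auxiliary solution by uniqueness, and then verify the candidate termwise --- is the same architecture as the paper's proof. The difference is the choice of mollifier: the paper smooths with the resolvent $J_\ep$, so the identification $Z_*=J_\ep Z_\ep$ only requires applying the \emph{bounded} operator $J_\ep$ to the equation, and one concludes at the end via injectivity of $J_\ep$; you instead factor out $A^{1/2}$, so your identification $Z_\ep=A^{1/2}\widetilde Z_\ep$ requires applying the \emph{unbounded} $A^{1/2}$ to the equation for $\widetilde Z_\ep$, which is not licensed by the regularity you list (you would need $\widetilde Z_\ep(t)\in D(A^{3/2})$, or an argument at the level of the Duhamel formula using closedness of $A^{1/2}$, and likewise $C^1$-regularity of $A\widetilde Z_\ep$ does not follow from $\widetilde Z_\ep\in C([0,\infty);D(A))$). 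This is repairable, but it is an extra step that the paper's bounded-mollifier route avoids.

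The concrete failure is in your last display. Substituting the defining equation $\ep Z_\ep''+AZ_\ep+Z_\ep'=A^{3/2}e^{-tA}u_0$ into
\[
\ep v_\ep''(t)+Av_\ep(t)+v_\ep'(t)=\ep A^2e^{-tA}u_0+\ep A^{1/2}\bigl(\ep Z_\ep''(t)+AZ_\ep(t)+Z_\ep'(t)\bigr)
\]
yields $2\ep A^2e^{-tA}u_0$, not $0$: the two terms add rather than cancel. The cancellation occurs only if the forcing in \eqref{eq:aux_Zep} is $-A^{3/2}e^{-tA}u_0$, equivalently if the conclusion reads $u_{1\ep}(t)=e^{-tA}u_0-\ep A^{1/2}Z_\ep(t)$; indeed $u_{1\ep}(t)-e^{-tA}u_0$ solves the damped equation with source $-\ep A^2e^{-tA}u_0$ and zero data. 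This sign discrepancy is in fact already present in the paper: the auxiliary problem \eqref{eq:aux_Z*} carries the forcing $-A^{3/2}J_\ep e^{-tA}u_0$, with a minus sign inconsistent with the claimed relation $Z_*=J_\ep Z_\ep$ but exactly what makes the verification $\ep w''=-Aw-w'$ go through. Since only $\|A^{1/2}Z_\ep(t)\|$ is used in Lemma \ref{lem:ee-Zep} and Proposition \ref{prop:Kisynski2}, the sign is harmless downstream; but your proof as written asserts a cancellation that does not happen, so you must flip the sign of the forcing (or of $\ep A^{1/2}Z_\ep$) and note the inconsistency rather than claim the right-hand side "reduces to zero."
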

\begin{proof}
We consider an auxiliary problem 
\begin{align}\label{eq:aux_Z*}
\begin{cases}
\ep Z_*''(t)+AZ_*(t)+Z_*'(t)=-A^{3/2}J_\ep e^{-tA}u_0, 
\\
(Z_{\ep},Z_{\ep}')(0)=(0,0).
\end{cases}
\end{align}
Then by uniqueness of solutions, we see $Z_*(t)=J_\ep Z_{\ep}(t)$. 

Put $w(t)=J_\ep e^{-tA}u_0+\ep A^{1/2}Z_*(t)$. Then we have 
\[w(0)=J_\ep u_0, \quad w'(t)=-AJ_\ep e^{-tA}u_0+\ep A^{1/2}Z_*'(t)\]
and
\[w'(0)=-AJ_\ep u_0.\]
Moreover, since $A^{1/2}Z_* \in C^2([0,\infty);H)$, one can deduce
\begin{align*}
\ep w''(t)
&=\ep A^2J_\ep e^{-tA}u_0+\ep^2A^{1/2}Z_*''(t)
\\
&=\ep A^2J_\ep e^{-tA}u_0+\ep A^{1/2}\Big(-A^{3/2}J_\ep e^{-tA}u_0-AZ(t)-Z'(t)\Big)
\\
&=-A\big(\ep A^{1/2}Z(t)\big)+\ep A^{1/2}Z'(t)
\\
&=-Aw(t)-w'(t).
\end{align*}
The above conditions with the uniqueness of solutions provide
\[
J_\ep u_{1\ep}(t)=w(t)=J_\ep e^{-tA}u_0+\ep A^{1/2}Z_*(t)
=J_\ep\Big( e^{-tA}u_0+\ep A^{1/2}Z_\ep(t)\Big).
\]
Since $J_\ep$ is injective, the proof is complete. 
\end{proof}

Next, we deal with the estimate of $A^{1/2}Z_\ep(t)$ 
via the energy method. 
\begin{lemma}\label{lem:ee-Zep}
Assume that $u_0\in D(A)$. Let $Z_\ep$ be a unique solution to 
\eqref{eq:aux_Zep}. Then 
\begin{align*}
\ep \|Z_\ep'(t)\|^2+\|A^{1/2}Z_\ep(t)\|^2
+\int_0^t\|Z_\ep'(s)\|^2\,ds
\leq 
\frac{\|Au_0\|^2}{2}.
\end{align*}
\end{lemma}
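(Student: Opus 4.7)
My plan is to carry out a standard energy estimate, taking the inner product of the equation in \eqref{eq:aux_Zep} with $Z_\ep'(t)$. Since the source term is $A^{3/2}e^{-tA}u_0 = A^{1/2}e^{-tA}(Au_0)$, which lies in $H$ for all $t\geq 0$ by the smoothing of $\{e^{-tA}\}_{t\geq0}$ applied to $Au_0\in H$, the corresponding pairing is well-defined; standard theory for the abstract wave equation with this source yields sufficient regularity of $Z_\ep$ to justify the manipulations.

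The key computation is the identity
\begin{equation*}
\frac{1}{2}\frac{d}{dt}\Big(\ep\|Z_\ep'(t)\|^2+\|A^{1/2}Z_\ep(t)\|^2\Big)+\|Z_\ep'(t)\|^2
=\big(A^{3/2}e^{-tA}u_0,\,Z_\ep'(t)\big).
\end{equation*}
I would then apply the Cauchy--Schwarz and Young inequalities on the right-hand side in the form $ab\leq \tfrac{1}{2}a^2+\tfrac{1}{2}b^2$, so that exactly half of the dissipative term $\|Z_\ep'(t)\|^2$ is absorbed into the left-hand side, leaving
\begin{equation*}
\frac{d}{dt}\Big(\ep\|Z_\ep'(t)\|^2+\|A^{1/2}Z_\ep(t)\|^2\Big)+\|Z_\ep'(t)\|^2
\leq \|A^{3/2}e^{-tA}u_0\|^2.
\end{equation*}

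Integrating from $0$ to $t$ and using the vanishing initial data $(Z_\ep,Z_\ep')(0)=(0,0)$, I obtain
\begin{equation*}
\ep\|Z_\ep'(t)\|^2+\|A^{1/2}Z_\ep(t)\|^2+\int_0^t\|Z_\ep'(s)\|^2\,ds
\leq \int_0^t\|A^{3/2}e^{-sA}u_0\|^2\,ds.
\end{equation*}
Finally, to bound the right-hand side by $\tfrac{1}{2}\|Au_0\|^2$, I rewrite $A^{3/2}e^{-sA}u_0=A^{1/2}e^{-sA}(Au_0)$ and apply Lemma \ref{lem:max-reg} with $n=0$ and $f=Au_0\in H$, which yields
\begin{equation*}
\int_0^t\|A^{1/2}e^{-sA}(Au_0)\|^2\,ds\leq \frac{\|Au_0\|^2}{2}.
\end{equation*}

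The proof is almost entirely routine; the only subtlety worth noting is the correct bookkeeping of the factor $\tfrac{1}{2}$ in the Young step, which is essential so that the constant on the right-hand side comes out exactly as $\tfrac{1}{2}\|Au_0\|^2$ as stated, rather than with an extra factor. The invocation of Lemma \ref{lem:max-reg} relies on treating $Au_0$ (rather than $u_0$) as the seed vector in $H$, which is the natural way to stay within the hypothesis $u_0\in D(A)$ without demanding $u_0\in D(A^{3/2})$.
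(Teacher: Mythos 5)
Your proposal is correct and follows essentially the same route as the paper: pairing the equation with $Z_\ep'(t)$, absorbing half of $\|Z_\ep'(t)\|^2$ by Young's inequality, integrating with the zero initial data, and invoking Lemma \ref{lem:max-reg} with $n=0$ applied to $f=Au_0$ (so that only $u_0\in D(A)$ is needed). The only cosmetic difference is a sign of the source term in the pairing, which is immaterial after Cauchy--Schwarz.
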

\begin{proof}
Taking the inner product of the both sides of \eqref{eq:aux_Zep} by $Z_\ep'(t)$, 
one has 
\begin{align}
\nonumber
\frac{d}{dt}\Big(\ep \|Z_\ep''(t)\|^2+\|A^{1/2}Z_\ep(t)\|^2\Big)
+
2\|Z_\ep'(t)\|^2
&=
2(Z_\ep'(t), \ep Z_\ep''(t)+AZ_\ep(t)+Z_\ep'(t))
\\
\nonumber
&=
-2(Z_\ep'(t),A^{3/2}e^{-tA}u_0)
\\
\label{eq:Zest}
&\leq 
\|Z_\ep'(t)\|^2
+
\|A^{3/2}e^{-tA}u_0\|^2.
\end{align}
In view of Lemma \ref{lem:max-reg} with $n=0$, we see that 
\[
\int_0^\infty\|A^{3/2}e^{-tA}u_0\|^2\,dt\leq \frac{\|Au_0\|^2}{2}
\]
Therefore integrating \eqref{eq:Zest} over $[0,t]$, we obtain 
the desired inequality. 
\end{proof}

Summarizing the above lemmas, one can provide a proof of Proposition \ref{prop:Kisynski2}.\\
\begin{proof}[Proof of Proposition \ref{prop:Kisynski2}]
By definition of $u_{1\ep}$ and $u_{2\ep}$ with Lemma \ref{lem:u2ep-v}, 
we already have
\begin{align*}
u_\ep(t)
&=u_{1\ep}(t)+u_{2\ep}(t)
\\
&=e^{-tA}u_0+\ep A^{1/2}Z_{\ep}(t)+u_{2\ep}(t). 
\end{align*}
Applying Proposition \ref{prop:Kisynski1} to $u_{2\ep}$ we can deduce
\[
\|u_{2\ep}(t)\|\leq C\ep \|Au_0+u_1\|.
\]
On the other hand, we see from Lemma \ref{lem:ee-Zep} that 
\[
\|A^{1/2}Z_\ep(t)\|\leq \frac{\|Au_0\|}{\sqrt{2}}.
\]
Consequently, we obtain the desired estimate $\|u_\ep(t)-e^{-tA}u_0\|\leq C\ep (\|Au_0\|+\|u_1\|)$.
\end{proof}

%%%%%%%%%%%%%%%%%%%%%%%%%%%%%%%%%%%%%%%%%%%%%%%%%%%%%%%%%%%
%%%%%%                                               %%%%%%
%                                                         %
%           Section 4 : 
%                                                         %
%%%%%%                                               %%%%%%
%%%%%%%%%%%%%%%%%%%%%%%%%%%%%%%%%%%%%%%%%%%%%%%%%%%%%%%%%%%
\section{Second order asymptotics in the singular limit}\label{sec:asym}

In this section, let us prove Theorem \ref{thm:main}. 

The following lemma is a consequence of Lemma \ref{lem:uep-v} to problems \eqref{ADW1} and \eqref{ADW2}.
\begin{lemma}\label{lem:u1ep-etAu0}
Assume that $(u_0,u_1)\in D(A^{3/2})\times D(A^{1/2})$. 
Let $u_{1\ep}$ and $u_{2\ep}$ be solutions of the problems 
\eqref{ADW1} and \eqref{ADW2}, respectively. 
Then the following assertions hold.
\begin{itemize}
\item[\bf (i)]
Let $\widetilde{U}_{1\ep}$ be a unique solution of the following problem
\begin{align}\label{eq:aux_U1ep-4}
&\begin{cases}
\ep \widetilde{U}_{1\ep}''(t)+A\widetilde{U}_{1\ep}(t)+\widetilde{U}_{1\ep}'(t)=Ae^{-tA}J_\ep u_0, 
\quad t\geq 0,
\\
(\widetilde{U}_{1\ep},\widetilde{U}_{1\ep}')(0)=(\ep AJ_\ep u_0, AJ_\ep u_0).
\end{cases}
\end{align}
Then $u_{1\ep}(t)=e^{-tA}J_\ep u_0+\ep \widetilde{U}_{1\ep}'(t)$. 
\item[\bf (ii)]
Let $\widetilde{U}_{2\ep}$ be a unique solution of the following problem
\begin{align}\label{eq:aux_U2ep-4}
\begin{cases}
\ep \widetilde{U}_{2\ep}''(t)+A\widetilde{U}_{2\ep}(t)+\widetilde{U}_{2\ep}'(t)=\ep Ae^{-tA}J_\ep v_1, 
\quad t\geq 0,
\\
(\widetilde{U}_{2\ep},\widetilde{U}_{2\ep}')(0)=(-\ep J_\ep v_1, -J_\ep v_1)
\end{cases}
\end{align}
with $v_1=Au_0+u_1$.
Then $u_{2\ep}(t)=\ep e^{-tA}J_\ep v_1+\ep \widetilde{U}_{2\ep}'(t)$. 
\end{itemize}
\end{lemma}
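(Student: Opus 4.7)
The plan is to derive both (i) and (ii) as direct instances of Lemma~\ref{lem:uep-v}, which already provides a decomposition of any solution of \eqref{ADW} into a semigroup part and an $\ep$-weighted correction. For part (i), I would apply Lemma~\ref{lem:uep-v} with the initial data $(u_0,-Au_0)$ of problem \eqref{ADW1}: replacing $u_1$ by $-Au_0$ in the conclusion of Lemma~\ref{lem:uep-v} gives
\[
u_{1\ep}(t)=e^{-tA}\bigl(u_0-\ep AJ_\ep u_0\bigr)+\ep U_{1\ep}'(t).
\]
The key algebraic simplification is the resolvent identity $u_0-\ep AJ_\ep u_0=J_\ep u_0$, which follows from $J_\ep(I+\ep A)=I$. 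This simultaneously rewrites the first term as $e^{-tA}J_\ep u_0$ and turns the forcing $Ae^{-tA}(u_0-\ep AJ_\ep u_0)$ appearing in the equation for $U_{1\ep}$ into $Ae^{-tA}J_\ep u_0$. The initial data $(-\ep J_\ep(-Au_0),-J_\ep(-Au_0))$ become $(\ep AJ_\ep u_0,AJ_\ep u_0)$ once we use that $J_\ep$ commutes with $A$ on $D(A)$. Relabeling the resulting object as $\widetilde U_{1\ep}$ (to separate notation from Lemma~\ref{lem:uep-v}) gives \eqref{eq:aux_U1ep-4} and the claimed identity for $u_{1\ep}$.

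For part (ii), I would repeat the same recipe with the initial data $(0,v_1)=(0,Au_0+u_1)$ of problem \eqref{ADW2}. Lemma~\ref{lem:uep-v} immediately delivers
\[
u_{2\ep}(t)=e^{-tA}(0+\ep J_\ep v_1)+\ep \widetilde U_{2\ep}'(t)=\ep e^{-tA}J_\ep v_1+\ep \widetilde U_{2\ep}'(t),
\]
while the forcing in the governing equation for $\widetilde U_{2\ep}$ becomes $Ae^{-tA}(0+\ep J_\ep v_1)=\ep Ae^{-tA}J_\ep v_1$ with initial data $(-\ep J_\ep v_1,-J_\ep v_1)$, which is exactly \eqref{eq:aux_U2ep-4}.

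The only point that deserves a brief check is that the regularity hypothesis $(u_0,u_1)\in D(A^{3/2})\times D(A^{1/2})$ suffices to make both $\widetilde U_{1\ep}$ and $\widetilde U_{2\ep}$ genuine strong solutions, so that the decomposition identities hold in the classical sense used throughout the paper. Since $J_\ep u_0 \in D(A^{5/2})$ and $J_\ep v_1\in D(A^{3/2})$, the forcing terms $Ae^{-tA}J_\ep u_0$ and $\ep Ae^{-tA}J_\ep v_1$ are continuous in $t$ with values in $D(A^{1/2})$, which puts us squarely in the well-posedness regime already invoked in Section~\ref{sec:alt_proof}. In short, the lemma is essentially bookkeeping built on the single identity $J_\ep=I-\ep AJ_\ep$, and I do not anticipate any genuine obstacle beyond correctly tracking the initial data and forcing after the substitutions.
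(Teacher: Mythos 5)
Your proposal is correct and matches the paper's own treatment: the authors simply state that Lemma \ref{lem:u1ep-etAu0} is a consequence of Lemma \ref{lem:uep-v} applied to problems \eqref{ADW1} and \eqref{ADW2}, which is exactly the substitution $(u_0,u_1)\mapsto(u_0,-Au_0)$ and $(u_0,u_1)\mapsto(0,v_1)$ you carry out, together with the identity $u_0-\ep AJ_\ep u_0=J_\ep u_0$. Your bookkeeping of the forcing terms and initial data is accurate, so nothing is missing.
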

\begin{remark}\label{Firstremof4}
Under the assumption of Theorem \ref{thm:main}, it is known that the problem (\ref{ADW}) has a unique (strong) solution $u_{\ep} \in C([0,\infty);D(A)) \cap C^{1}([0,\infty);D(A^{1/2})) \cap C^{2}([0,\infty);H)$ for each $\ep \in (0,1]$.
\end{remark}

Next we define the respective profile functions of \eqref{eq:aux_U1ep-4} and \eqref{eq:aux_U2ep-4} as 
\begin{align*}
V_{1\ep}(t)
&=2\ep Ae^{-tA}J_\ep u_0+tAe^{-tA}J_\ep u_0, 
\\
V_{2\ep}(t)
&=
-2\ep e^{-tA}J_\ep v_1+\ep tAe^{-tA}J_\ep v_1.
\end{align*}
Then for $j=1,2$, we decompose $\widetilde{U}_{j\ep}$ 
into the semigroup part $V_{j\ep}(t)$ and the other part as a 
solution of second order differential equations.
\begin{lemma}\label{lem:W=Uep-v}
Assume that $(u_0,u_1)\in D(A^{3/2})\times D(A^{1/2})$. 
Let $\widetilde{U}_{1\ep}$ and $\widetilde{U}_{2\ep}$ be solutions of the problems 
\eqref{eq:aux_U1ep-4} and \eqref{eq:aux_U2ep-4}, respectively. Then the following assertions hold.
\begin{itemize}
\item[\bf (i)]
Let $W_{1\ep}$ be a unique solution to the Cauchy problem
\begin{align}\label{eq:aux_W1ep}
\begin{cases}
\ep W_{1\ep}''(t)+AW_{1\ep}(t)+W_{1\ep}'(t)=-V_{1\ep}''(t), 
\quad t\geq 0,
\\
(W_{1\ep},W_{1\ep}')(0)=(-AJ_\ep u_0,-2A^2J_\ep u_0). 
\end{cases}
\end{align}
Then $\widetilde{U}_{1\ep}(t)=V_{1\ep}(t)+\ep W_{1\ep}(t)$. 
\item[\bf (ii)]
Let $W_{2\ep}$ be a unique solution to the Cauchy problem
\begin{align}\label{eq:aux_W2ep}
\begin{cases}
\ep W_{2\ep}''(t)+AW_{2\ep}(t)+W_{2\ep}'(t)=-V_{2\ep}''(t), 
\quad t\geq 0,
\\
(W_{2\ep},W_{2\ep}')(0)=(J_\ep v_1, -2AJ_\ep v_1). 
\end{cases}
\end{align}
Then $\widetilde{U}_{2\ep}(t)=V_{2\ep}(t)-u_{2\ep}(t)+\ep W_{2\ep}(t)$. 
\end{itemize}
\end{lemma}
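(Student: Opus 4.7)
The plan is direct verification. Define
\[
W_{1\ep}(t) := \frac{\widetilde{U}_{1\ep}(t) - V_{1\ep}(t)}{\ep}, \qquad W_{2\ep}(t) := \frac{\widetilde{U}_{2\ep}(t) - V_{2\ep}(t) + u_{2\ep}(t)}{\ep},
\]
so that the asserted decompositions hold by construction. It then remains only to check that $W_{1\ep}$ and $W_{2\ep}$ solve the Cauchy problems \eqref{eq:aux_W1ep} and \eqref{eq:aux_W2ep}.

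The structural heart of both parts is that each profile $V_{j\ep}$ is designed to be a particular solution of the limit first-order equation $A(\cdot) + (\cdot)' = \text{(source)}$ with exactly the same right-hand side as the corresponding $\widetilde{U}_{j\ep}$. Using $\frac{d}{dt}e^{-tA} = -Ae^{-tA}$ and the commutativity of $A$, $J_\ep$ and $e^{-tA}$, one differentiates $V_{1\ep}$, $V_{2\ep}$ and verifies, after cancellation of the $tA^2 e^{-tA}$ and $\ep A^2 e^{-tA}$ contributions, the key identities
\[
A V_{1\ep}(t) + V_{1\ep}'(t) = A e^{-tA} J_\ep u_0, \qquad A V_{2\ep}(t) + V_{2\ep}'(t) = \ep A e^{-tA} J_\ep v_1.
\]
Subtracting these from \eqref{eq:aux_U1ep-4} and \eqref{eq:aux_U2ep-4} respectively (and in (ii) additionally using that $u_{2\ep}$ solves the homogeneous equation \eqref{ADW2}) and dividing by $\ep$ yields
\[
\ep W_{j\ep}''(t) + A W_{j\ep}(t) + W_{j\ep}'(t) = -V_{j\ep}''(t), \quad j=1,2,
\]
which is exactly the PDE part of \eqref{eq:aux_W1ep}--\eqref{eq:aux_W2ep}.

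The initial conditions are then read off by direct substitution: one evaluates $V_{j\ep}(0)$, $V_{j\ep}'(0)$ together with $u_{2\ep}(0)=0$, $u_{2\ep}'(0)=v_1$, and matches against the prescribed data of $\widetilde{U}_{j\ep}$. In the computation of $W_{2\ep}'(0)$ the identity $v_1 - J_\ep v_1 = \ep A J_\ep v_1$, coming from $v_1 = (I+\ep A)J_\ep v_1$, is what cancels the $\ep^{-1}$ prefactor and produces the clean value $-2 A J_\ep v_1$.

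There is no genuine obstacle: the lemma is a bookkeeping verification. The conceptual content lies entirely in the construction of $V_{j\ep}$, which simultaneously isolates the desired $tA^2 e^{-tA}$-type leading profile appearing in Theorem \ref{thm:main} and acts as a particular solution of the limit equation, so that the residual equation for $W_{j\ep}$ carries only the small source $-V_{j\ep}''$ of size $O(A^2 e^{-tA}\,\cdot)$; the insertion of $-u_{2\ep}$ in part (ii) is a cosmetic device that converts the awkward initial data $(-\ep J_\ep v_1, -J_\ep v_1)$ of $\widetilde{U}_{2\ep}$ into the form $(J_\ep v_1, -2AJ_\ep v_1)$, which is better adapted to the energy estimates to follow.
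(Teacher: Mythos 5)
Your approach is correct and is in substance the same verification the paper intends: the paper's one-line proof defers to the method of Lemma \ref{lem:u2ep-v}, which carries out exactly this subtraction but wrapped in a $J_\ep$-regularized auxiliary problem (plus uniqueness and injectivity of $J_\ep$) to justify the formal differentiations; under the hypothesis $(u_0,u_1)\in D(A^{3/2})\times D(A^{1/2})$ your direct route is legitimate. The key identities $AV_{1\ep}(t)+V_{1\ep}'(t)=Ae^{-tA}J_\ep u_0$ and $AV_{2\ep}(t)+V_{2\ep}'(t)=\ep Ae^{-tA}J_\ep v_1$ do hold, the residual equations follow by subtraction, and in part (ii) the initial data come out exactly as stated, the cancellation via $v_1-J_\ep v_1=\ep AJ_\ep v_1$ being the right observation.

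One point you should not gloss over: in part (i) the initial velocity does \emph{not} match the statement. With $W_{1\ep}=(\widetilde{U}_{1\ep}-V_{1\ep})/\ep$ one computes $V_{1\ep}'(0)=AJ_\ep u_0-2\ep A^2J_\ep u_0$ and $\widetilde{U}_{1\ep}'(0)=AJ_\ep u_0$, hence $W_{1\ep}'(0)=+2A^2J_\ep u_0$, whereas \eqref{eq:aux_W1ep} prescribes $-2A^2J_\ep u_0$; as written, the lemma's two assertions in (i) are mutually inconsistent. This is a harmless sign typo in the paper (only $\|W_{1\ep}'(0)\|$ enters the energy estimate of Lemma \ref{lem:ee-W1ep}), but a verification that asserts the data ``match'' without recording the correction is incomplete. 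Finally, calling the insertion of $-u_{2\ep}$ in (ii) ``cosmetic'' undersells it: without that term $W_{2\ep}'(0)$ would contain the unbounded contribution $-\ep^{-1}J_\ep v_1$, so no $\ep$-uniform energy bound would be available, and the resulting identity \eqref{eq:inilayer} is precisely what later generates the initial-layer term $e^{-t/\ep}v_1$ in the proof of Theorem \ref{thm:main}.
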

\begin{proof}
Both parts of the proof of Lemma \ref{lem:W=Uep-v}, {\bf(i)} and {\bf (ii)} are similar to Lemma \ref{lem:u2ep-v}.
\end{proof}
\begin{remark}
The function $\widetilde{U}_{2\ep}$ contains 
the solution of original problem \eqref{ADW2}.
This fact can be understood as the effect of initial layer function as $\ep \to0$. 
\end{remark}

The energy estimates for $W_{j\ep}$ $(j=1,2)$ 
can be seen as follows.
\begin{lemma}\label{lem:ee-W1ep}
Assume that $(u_0,u_1)\in D(A^{3/2})\times D(A^{1/2})$. 
Let $W_{1\ep}$ and $W_{2\ep}$ be unique solutions to problems \eqref{eq:aux_W1ep} and \eqref{eq:aux_W2ep}, respectively. 
Then 
there exist positive constants $C_1>0$ and $C_2>0$ such that
\begin{align*}
\ep \|W_{1\ep}'(t)\|^2+\|A^{1/2}W_{1\ep}(t)\|^2
+\int_0^t\|W_{1\ep}'(s)\|^2\,ds
\leq 
C_1\|A^{3/2}u_0\|^2,
\\
\ep \|W_{2\ep}'(t)\|^2+\|A^{1/2}W_{2\ep}(t)\|^2
+\int_0^t\|W_{2\ep}'(s)\|^2\,ds
\leq 
C_2\|A^{1/2}v_1\|^2.
\end{align*}
\end{lemma}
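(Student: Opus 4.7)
My plan is to treat both estimates in parallel via the standard hyperbolic energy identity for the equation
\[
\ep W_{j\ep}''(t)+AW_{j\ep}(t)+W_{j\ep}'(t)=-V_{j\ep}''(t).
\]
Testing with $W_{j\ep}'(t)$ and using Cauchy--Schwarz on the inhomogeneous term I expect to get
\[
\frac{d}{dt}\Big(\ep\|W_{j\ep}'(t)\|^2+\|A^{1/2}W_{j\ep}(t)\|^2\Big)+\|W_{j\ep}'(t)\|^2\leq \|V_{j\ep}''(t)\|^2,
\]
so that integration over $[0,t]$ reduces the lemma to bounding (a) the initial datum $\ep\|W_{j\ep}'(0)\|^2+\|A^{1/2}W_{j\ep}(0)\|^2$ and (b) the integral $\int_0^\infty \|V_{j\ep}''(s)\|^2\,ds$ by the right-hand side.

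For the initial data I would use that $J_\ep$ commutes with every fractional power of $A$ and is a contraction, combined with Lemma \ref{eq:lem1-1-2}. For instance, the initial velocity of $W_{1\ep}$ is $-2A^2J_\ep u_0=-2A^{1/2}J_\ep(A^{3/2}u_0)$, so Lemma \ref{eq:lem1-1-2} gives $\ep\|W_{1\ep}'(0)\|^2\leq 4\|A^{3/2}u_0\|^2$, while $\|A^{1/2}W_{1\ep}(0)\|^2=\|A^{3/2}J_\ep u_0\|^2\leq\|A^{3/2}u_0\|^2$; the analogous computation for $W_{2\ep}(0)=J_\ep v_1$ and $W_{2\ep}'(0)=-2AJ_\ep v_1$ will bring out $\|A^{1/2}v_1\|^2$.

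For the forcing integrals I would compute $V_{j\ep}''$ directly. A quick differentiation gives
\[
V_{1\ep}''(t)=2\ep A^3e^{-tA}J_\ep u_0-2A^2e^{-tA}J_\ep u_0+tA^3e^{-tA}J_\ep u_0,
\]
\[
V_{2\ep}''(t)=-4\ep A^2e^{-tA}J_\ep v_1+\ep tA^3e^{-tA}J_\ep v_1.
\]
The harmless terms (those without an explicit $\ep$ in front of the dangerous high power) I would control by the maximal regularity Lemma \ref{lem:max-reg}: for example $\int_0^\infty\|A^2e^{-tA}J_\ep u_0\|^2\,dt=\int_0^\infty\|A^{1/2}e^{-sA}J_\ep A^{3/2}u_0\|^2\,ds\leq \tfrac{1}{2}\|A^{3/2}u_0\|^2$ from the $n=0$ case, while the $t$-weighted term $tA^3e^{-tA}J_\ep u_0=tA^{3/2}e^{-tA}\cdot J_\ep A^{3/2}u_0$ is handled by the $n=2$ case of the same lemma. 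The analogous reductions cover the two terms of $V_{2\ep}''$ whenever $v_1\in D(A^{1/2})$.

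The main obstacle, and the only step that is not completely mechanical, is controlling those summands in which an $\ep$ sits in front of a power of $A$ that is too high for the available regularity of the data, namely $\ep A^3 e^{-tA}J_\ep u_0$ and $\ep A^2 e^{-tA}J_\ep v_1$. I would handle this by the algebraic identity $\ep AJ_\ep=I-J_\ep$, which lowers the operator power by one at the cost of a harmless contraction: for instance
\[
\ep A^3 e^{-tA}J_\ep u_0=A^2e^{-tA}u_0-A^2e^{-tA}J_\ep u_0=A^{1/2}e^{-tA}(A^{3/2}u_0)-J_\ep A^{1/2}e^{-tA}(A^{3/2}u_0),
\]
and similarly $\ep A^2 e^{-tA}J_\ep v_1=Ae^{-tA}v_1-Ae^{-tA}J_\ep v_1=A^{1/2}e^{-tA}(A^{1/2}v_1)-J_\ep A^{1/2}e^{-tA}(A^{1/2}v_1)$. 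After this trick, each piece lies in the scope of Lemma \ref{lem:max-reg} with $n=0$ (or $n=2$ for the $t$-weighted variants), so the contractivity of $J_\ep$ delivers bounds of the desired form $C\|A^{3/2}u_0\|^2$ and $C\|A^{1/2}v_1\|^2$ respectively. Combining (a) and (b) finishes the lemma.
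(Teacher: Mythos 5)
Your proposal is correct and follows essentially the same route as the paper: the same energy identity obtained by testing with $W_{j\ep}'$, the same explicit computation of $V_{j\ep}''$, and the same combination of Lemma \ref{lem:max-reg} (with $n=0$ and $n=2$) and the resolvent bounds to control the forcing integrals and the initial energies. The only cosmetic difference is that you substitute $\ep AJ_\ep=I-J_\ep$ before invoking maximal regularity, whereas the paper keeps the $\ep$ factors and absorbs them at the end via Lemma \ref{eq:lem1-1-2}; these are equivalent.
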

\begin{proof}
For $j=1,2$, 
taking the inner product of the both sides of (\ref{eq:aux_W1ep}) or (\ref{eq:aux_W2ep}) by $W_{j\ep}'(t)$, 
%and the equation of $W_{j\ep}$,% (\eqref{ADW1} \label{eq:aux_W1ep}or \eqref{ADW2}), 
we have 
\begin{align}
\nonumber
\frac{d}{dt}\Big(\ep \|W_{j\ep}'(t)\|^2+\|A^{1/2}W_{j\ep}(t)\|^2\Big)
+2\|\widetilde{W}_{j\ep}'(t)\|^2
&=
2(W_{j\ep}'(t), \ep W_{j\ep}''(t)+AW_{j\ep}(t)+W'_{j\ep}(t))
\\
\nonumber
&=
-2(\widetilde{W}_{j\ep}'(t),V_{j\ep}''(t))
\\
\label{eq:Wjep-est}
&\leq 
\|\widetilde{W}_{j\ep}'(t)\|^2+
\|V_{j\ep}''(t)\|^2.
\end{align}
Observe that 
\begin{align*}
V_{1\ep}''(t)
&=
2A^2e^{-tA}\Big(\ep A J_\ep u_0-J_\ep u_0\Big)
+
tA^3e^{-tA}J_\ep u_0,
\\
V_{2\ep}''(t)
&=
-4\ep A^2 e^{-tA}J_\ep v_1+\ep tA^3e^{-tA}J_\ep v_1.
\end{align*}
In view of Lemma \ref{lem:max-reg} with $n=0$ and $n=2$, we see that 
\begin{align*}
\int_0^\infty\|V_{1\ep}''(t)\|^2\,dt
&\leq 
8\int_0^\infty\Big\|A^2e^{-tA}\Big(\ep A J_\ep u_0-J_\ep u_0\Big)
\Big\|^2\,dt
+
2\int_0^\infty t^2\|A^3e^{-tA}J_\ep u_0\|^2\,dt
\\
&\leq 
4\Big\|A^{3/2}\Big(\ep A J_\ep u_0-J_\ep u_0\Big)\Big\|^2
+
\frac{1}{2}\|A^{3/2}J_\ep u_0\|^2
\\
&\leq 
C_1'\|A^{3/2}u_0\|^2,
\end{align*}
and also 
\begin{align*}
\int_0^\infty\|V_{2\ep}''(t)\|^2\,dt
&\leq 
32\ep^2\int_0^\infty\|A^2 e^{-tA}J_\ep v_1\|^2\,dt
+
2\ep^2\int_0^\infty t^2\|A^3e^{-tA}J_\ep v_1\|^2\,dt
\\
&\leq 
C_2'\ep^2\|A^{3/2}J_\ep v_1\|^2
\\
&\leq 
C_2'\|A^{1/2}v_1\|^2
\end{align*}
with some positive constants $C_1',C_2'>0$. 
Therefore integrating \eqref{eq:Wjep-est} $(j=1,2)$ over $[0,t]$, and using Lemma \ref{eq:lem1-1-2}
one obtains% by the fact $\ep\|A^{1/2}J_\ep f\|^2\leq \|f\|^2$ for $f\in H$ that 
\begin{align*}
\ep \|W_{1\ep}'(t)\|^2+\|A^{1/2}W_{1\ep}(t)\|^2
+
\int_0^t\|W_{1\ep}'(s)\|^2\,ds
&\leq 
\ep \|W_{1\ep}'(0)\|^2+\|A^{1/2}W_{1\ep}(0)\|^2
+
C_1'\|A^{3/2}u_0\|^2
\\
&=
\ep \|A^2J_\ep u_0\|^2+2\|A^{3/2}J_\ep u_0\|^2
+
C_1'\|A^{3/2}u_0\|^2
\\
&\leq 
(3+C_1')\|A^{3/2}u_0\|^2,
\end{align*}
and 
\begin{align*}
\ep \|W_{2\ep}'(t)\|^2+\|A^{1/2}W_{2\ep}(t)\|^2
+\int_0^t\|W_{2\ep}'(s)\|^2\,ds
&\leq 
\ep \|W_{2\ep}'(0)\|^2+\|A^{1/2}W_{2\ep}(0)\|^2
+
C_2'\|A^{1/2}v_1\|^2
\\
&=
4\ep \|AJ_\ep v_1\|^2+\|A^{1/2}J_\ep v_1\|^2
+
C_2'\|A^{1/2}v_1\|^2
\\
&\leq 
(5+C_2')\|A^{1/2}v_1\|^2.
\end{align*}
Setting $C_1=3+C_1'$ and $C_2=5+C_2'$,  we obtain the desired inequalities.
\end{proof}

Finally, we prove Theorem \ref{thm:main} via 
the energy estimates for $W_{j\ep}$ $(j=1,2)$.\\ 
\begin{proof}[Proof of Theorem \ref{thm:main}]
By using Lemmas \ref{lem:u1ep-etAu0} and \ref{lem:W=Uep-v}, 
we have
\begin{align*}
u_{1\ep}(t)
&=e^{-tA}J_\ep u_0+\ep \widetilde{U}_{1\ep}'(t)
\\
&=e^{-tA}J_\ep u_0+\ep V_{1\ep}'(t)+\ep^2\widetilde{W}_{1\ep}'(t),
\end{align*}
and 
\begin{align}
\nonumber
u_{2\ep}(t)
&=\ep e^{-tA}J_\ep v_1+\ep \widetilde{U}_{2\ep}'(t)
\\
\label{eq:decom-u2ep}
&=
\ep e^{-tA}J_\ep v_1+\ep V_{2\ep}'(t)-\ep u_{2\ep}'(t)+\ep^2 W_{2\ep}'(t).
\end{align}
Observe that the definition of $V_{1\ep}$
and the relation $J_\ep u_0=u_0-\ep A J_\ep u_0$ 
yield
\begin{align}
\nonumber
e^{-tA}J_\ep u_0+\ep V_{1\ep}'(t)
&=
e^{-tA}J_\ep u_0+\ep \Big(Ae^{-tA}J_\ep u_0-2\ep A^2e^{-tA}J_\ep u_0
-tA^2e^{-tA}J_\ep u_0\Big)
\\
\nonumber
&=
e^{-tA}u_0+\ep \Big(-2\ep A^2e^{-tA}J_\ep u_0
-tA^2e^{-tA}J_\ep u_0\Big)
\\
\label{eq:profile-u2ep}
&=
e^{-tA}u_0
-\ep tA^2e^{-tA}u_0
+\ep^2 \Big(-2A^2e^{-tA}J_\ep u_0
+tA^3e^{-tA}J_\ep u_0\Big).
\end{align}
Therefore 
from Lemma \ref{lem:ee-W1ep}, the uniform boundedness of $\|tAe^{-tA}\|$ and Lemma \ref{eq:lem1-1-2} 
%\[\ep\|A^{1/2}J_\ep f\|^2\leq \|f\|^2 \quad (f\in H)\], 
it follows that 
\begin{align}\label{asymptotics1}
\|u_{1\ep}(t)-e^{-tA}u_0+\ep tA^2e^{-tA}u_0\|\leq C\ep^{3/2}
\|A^{3/2}u_0\|.
\end{align}
It remains to find a profile of $u_{2\ep}$. 
Note that 
\begin{align*}
\ep e^{-tA}J_\ep v_1+\ep V_{2\ep}'(t)
&=
\ep e^{-tA}J_\ep v_1+
\ep \Big(\ep Ae^{-tA}J_\ep v_1+2\ep Ae^{-tA}J_\ep v_1-\ep tA^2e^{-tA}J_\ep v_1
\Big)
\\
&=
\ep e^{-tA}v_1+
\ep^2 \Big(2Ae^{-tA}J_\ep v_1-tA^2e^{-tA}J_\ep v_1
\Big).
\end{align*}
Here we already have the following identity via \eqref{eq:decom-u2ep} and \eqref{eq:profile-u2ep}:
\begin{equation}\label{eq:inilayer}
\ep u_{2\ep}'(t)+u_{2\ep}(t)=\ep e^{-tA}v_1+\ep^{3/2}\widetilde{V}_{2\ep}(t),
\end{equation}
where 
\[
\widetilde{V}_{2\ep}(t)=\ep^{1/2}W_{2\ep}'(t)+
\ep^{1/2}\Big(2Ae^{-tA}J_\ep v_1-tA^2e^{-tA}J_\ep v_1
\Big).
\]
In view of Lemma \ref{lem:ee-W1ep} 
and similarly to the estimate for \eqref{asymptotics1} 
one can get 
\begin{align}\label{eq:u2ep-error}
\|\widetilde{V}_{2\ep}(t)\|\leq C'\|A^{1/2}v_1\|.
\end{align}
We see from \eqref{eq:inilayer} that 
\[
\frac{d}{dt}\Big(e^{t/\ep}u_{2\ep}(t)\Big)=
e^{t/\ep}\Big(e^{-tA}v_1+\ep^{1/2}\widetilde{V}_{2\ep}(t)\Big).
\]
Integrating it over $[0,t]$, we can deduce
\begin{align*}
u_{2\ep}(t)= 
\int_0^t
e^{(s-t)/\ep}e^{-sA}v_1\,ds
+
\ep^{1/2}
\int_0^t
e^{(s-t)/\ep}
\widetilde{V}_{2\ep}(s)\,ds.
\end{align*}
For the first term, 
integration by parts yields that 
\begin{align*}
\int_0^t
e^{(s-t)/\ep}e^{-sA}v_1\,ds
&=
\left[
\ep e^{(s-t)/\ep}e^{-sA}v_1
\right]_{s=0}^{s=t}
+
\ep \int_0^t
e^{(s-t)/\ep}Ae^{-sA}v_1\,ds
\\
&=
\ep e^{-tA}v_1
-
\ep e^{-t/\ep}v_1
+
\ep \int_0^t
e^{(s-t)/\ep}Ae^{-sA}v_1\,ds,
\end{align*}
and then by using Lemma \ref{lem:max-reg} with $n=0$, one has
\begin{align*}
\left\|
\ep \int_0^t
e^{(s-t)/\ep}Ae^{-sA}v_1\,ds
\right\|
&\leq 
\ep \int_0^t
e^{(s-t)/\ep}\|Ae^{-sA}v_1\|\,ds
\\
&\leq 
\ep 
\left(\int_0^t
e^{2(s-t)/\ep}\,ds\right)^{1/2}
\left(
\int_0^t\|Ae^{-sA}v_1\|^2\,ds\right)^{1/2}
\\
&\leq 
\frac{\ep^{3/2}}{2} 
\|A^{1/2}v_1\|.
\end{align*}
Because of \eqref{eq:u2ep-error}, since one can obtain the following estimate  
\begin{align*}
\left\|
\ep^{1/2}
\int_0^t
e^{(s-t)/\ep}
\widetilde{V}_{2\ep}(s)\,ds
\right\|\leq 
C'
\ep^{1/2}\|A^{1/2}v_1\|
\int_0^t
e^{(s-t)/\ep}
\,ds
\leq 
C'
\ep^{3/2}\|A^{1/2}v_1\|,
\end{align*}
one has 
\begin{equation}\label{asymptotics2}
\|u_{2\ep}(t)-\ep(e^{-tA}v_1-e^{-t/\ep}v_1)\|\leq C''\ep^{3/2}\|A^{1/2}v_1\|.
\end{equation}
Combining \eqref{asymptotics1} and \eqref{asymptotics2}, 
one has arrived at the desired estimate.
\end{proof}

\subsection*{Acknowedgements}
The work of the first author is supported in part by Grant-in-Aid for Scientific Research (C) 15K04958 of JSPS. The work of the second author is partially supported 
by Grant-in-Aid for Scientific Research JP18K134450 of JSPS.

%%%%%%%%%%%%%%%%%%%%%%%%%%%%%%%%%%%%%%%%%%%%%%%%%%%%%%%%%%%%%%%%%%%%%%%%%%%%%%%%%%%%%%%%%%%%%%%%%%%%%%%%%%%%%%%%%%%%%%%%%%%%%%%%%%%%%%%%%%%%%%%%%%%%%%%%%%%%%%%%%%%%%%%%%%%%%%%%

\end{document}